\providecommand\given{}
\newcommand\SetSymbol[1][]{%
  \nonscript\:#1\vert
  \allowbreak
  \nonscript\:
  \mathopen{}}
\DeclarePairedDelimiterX\Set[1]{\{}{\}}{%
  \renewcommand\given{\SetSymbol[\delimsize]}
  #1}
\DeclarePairedDelimiterXPP\pospart[1]{}{(}{)}{^+}{#1}
\DeclarePairedDelimiterXPP\negpart[1]{}{(}{)}{^-}{#1}
\newcommand\R{\mathbb{R}}
\newcommand\N{\mathbb{N}}
\newcommand\PP{\mathcal{P}}
\newcommand\QQ{\mathcal{Q}}
\newtheorem{thm}{Theorem}[section]
\newtheorem{prop}[thm]{Proposition}
\newtheorem{cor}[thm]{Corollary}
\theoremstyle{definition}
\newtheorem{ex}[thm]{Example}
\newtheorem{rem}[thm]{Remark}
\DeclareMathOperator*{\vertices}{vert}
\DeclareMathOperator*{\Int}{int}
\DeclareMathOperator*{\gr}{gr}
\DeclareMathOperator*{\conv}{conv}
\DeclareMathOperator*{\cone}{cone}
\DeclareMathOperator*{\dom}{dom}
\DeclareMathOperator*{\aff}{aff}
\DeclareMathOperator*{\std}{^\text{\rm std}}
\DeclareMathOperator*{\homog}{^\text{\rm hom}}
\newcommand{\leqnomode}{\tagsleft@true\let\veqno\@@leqno}
\newcommand{\reqnomode}{\tagsleft@false\let\veqno\@@eqno}
\title{A solution method for arbitrary polyhedral convex set optimization problems}
\author{Andreas L{\"o}hne\thanks{Friedrich Schiller University Jena, Faculty of Mathematics and Computer Science, 07737 Jena, Germany, andreas.loehne@uni-jena.de}}
\begin{document}
\maketitle

\begin{abstract} \noindent We provide a solution method for the polyhedral convex set optimization problem, that is, the problem to minimize a set-valued mapping $F:\R^n \rightrightarrows \R^q$ with polyhedral convex graph with respect to a set ordering relation which is generated by a polyhedral convex cone $C \subseteq \R^q$. The method is proven to be correct and finite without any further assumption to the problem.

\medskip
\noindent
{\bf Keywords:} set optimization, vector linear programming, multiple objective linear programming, linear programming
\medskip

\noindent{\bf MSC 2010 Classification: 90C99, 90C29, 90C05}
\end{abstract}

\section{Introduction}

Polyhedral convexity is closely related to linear programming. Minimizing an extended real-valued polyhedral convex function of $n$ real variables over $\R^n$ can be expressed equivalently by a linear program and vice versa, for some details see, e.g., \cite[Section 3]{CirLoeWei19}. George Dantzig ends the abstract of his 1982 paper \cite{Dantzig82} about the origin of linear programming with the statement: 
\begin{quotation}
	{\em ``Linear Programming is viewed as a revolutionary development giving us the ability for the first time to state general objectives and to find, by means of the simplex method, optimal policy decisions for a broad class of practical decision problems of great complexity.''}
\end{quotation}

\noindent
One year later, the abstract of \cite{Dantzig83} starts in a very similar way but a programmatic note was added:

\begin{quotation}
{\em ``In the real world, planning tends to be ad hoc because of the many special-interest
groups with their multiple objectives. Much work remains to develop a more disciplined
infrastructure for decision making in which the full potential of mathematical programming models could be realized.''}
\end{quotation}

{\em Multi-objective optimization} and {\em vector linear programming} can be seen as some important part of such a development, see, e.g., \cite{Zeleny74, Ehrgott05, Loe11}. Again there is a strong connection to polyhedral convexity, as these classes are shown to be equivalent to the {\em polyhedral projection problem} \cite{LoeWei16}.  
{\em Polyhedral convex set optimization} is another element in this series of problem classes as it generalizes vector linear programming. 

Set optimization in the sense of optimizing a set-valued objective mapping on the basis of a set ordering, i.e., an ordering on the power set of, e.g., $\R^q$, started in the late 1990s by Daishi Kuroiwa, e.g., in \cite{Kuroiwa98}, see \cite[Section 2.2]{HamHeyLoeRudSch15} for further references. Surprisingly, this development began on a rather theoretical level with a very general problem setting. So it took several years until the first connections to applications were established, where the specific case of polyhedral convex problems played an increasing role, see, e.g., \cite[Section 7.5]{HamHeyLoeRudSch15} for a choice of references. We believe that one key to new more extensive decision making tools lies in a thorough development of this simplest class of set optimization problems, the {\em polyhedral convex set optimization problems}, which in some sense can be seen as {\em ``set linear programs''}.  
 
A set-valued mapping $F:\R^n\rightrightarrows \R^q$ is said to be \emph{polyhedral convex} if its \emph{graph}
$$ \gr F \coloneqq \Set{(x,y) \in \R^n\times \R^q \given y \in F(x)}$$
is a convex polyhedron. 
Thus, every polyhedral convex set-valued mapping can be represented by two integers $n,q \in \N$, which  specify the dimensions of the domain and codomain spaces, together with a convex polyhedron $\gr F \subseteq \R^n \times \R^q$, as follows:
$$ F: \R^n \rightrightarrows \R^q, \quad F(x) = \Set{y \in \R^q \given (x,y) \in \gr F}.$$

The problem to minimize $F$ with respect to the partial ordering $\supseteq$ is called a \emph{polyhedral convex set optimization problem}. Note that the inverse set inclusion $\supseteq$ is understood as ``less than or equal to''.
The objective mapping $F$ is sometimes given together with a nonempty polyhedral convex cone $C$, called the \emph{ordering cone}.
This leads to the problem to minimize the set-valued mapping 
$$F_C:\R^n\rightrightarrows \R^q,\quad F_C(x) \coloneqq F(x) + C$$
with respect to $\supseteq$.
This problem is stated as
\leqnomode
\begin{gather}\label{eq:p}\tag{P}
  \min F(x) + C\quad \text{ s.t. }\; x \in \R^n.
\end{gather}
\reqnomode
If $C\subseteq \R^q$ is a nonempty polyhedral convex cone, then 
$$ y^1 \leq_C y^2 \; \iff \; y^2 \in \Set{y^1} +  C$$
defines a reflexive and transitive ordering on $\R^q$, which can be lifted to a reflexive and transitive ordering on the power set of $\R^q$ defined by
\begin{equation}\label{eq:setrel}
	Y^1 \preccurlyeq_C Y^2  \;\iff\; Y^2 \subseteq Y^1 + C\text{.}
\end{equation}
Problem \eqref{eq:p} can be seen as minimizing $F$ with respect to $\preccurlyeq_C$.
Note that the original problem of minimizing $F$ without mentioning any ordering cone is re-obtained by the choice $C=\Set{0}$.

Problem \eqref{eq:p} is called {\em bounded} if there exists a bounded set $L \subseteq \R^q$ (without loss of generality, $L$ can be a polytope) such that
$$ \forall x \in \R^n:\quad L + C \supseteq F(x).$$
The \emph{upper image} of \eqref{eq:p} is the convex polyhedron defined by
$$ \PP \coloneqq C + \bigcup_{x \in \R^n} F(x),$$ 
see, e.g., \cite{HeyLoe}. It can be seen as the infimum of $F$ over $\R^n$ with respect to $\preccurlyeq_C$, see, e.g., \cite{HamHeyLoeRudSch15}.

A solution method for bounded problems has been developed in \cite{LoeSch13}. A {\em solution} of a bounded instance of problem \eqref{eq:p} is a finite set of vectors 
$$ \bar S \coloneqq \Set{x^1,\dots,x^k} \subseteq \dom F \coloneqq \Set{x \in \R^n \given F(x) \neq \emptyset} $$ 
such that
\begin{equation}\label{eq:infimizer}
	\PP = C + \conv \bigcup_{\bar x \in \bar S} F(\bar x)  
\end{equation}
holds and for all $\bar x \in \bar S$ one has
\begin{equation}\label{eq:minimizer}
	\not\exists x\in \R^n:\; F(x) + C \supsetneq F(\bar x) + C.
\end{equation}
A set $\bar S \subseteq \R^n$ satisfying \eqref{eq:infimizer} is called an {\em infimizer} and an element $\bar x \in \dom F$ satisfying \eqref{eq:minimizer} is called a {\em minimizer} of the  set optimization problem \eqref{eq:p} in case it is bounded. 

In the first part of this paper, Sections \ref{sec:minimizer} and \ref{sec:alg_bounded}, we give a simplified variant of the algorithm for bounded problems from \cite{LoeSch13} and we prove correctness and finiteness of this variant.
In the second part, Sections \ref{sec:alg_unbounded}, \ref{sec:std} and \ref{sec:empty}, we extend the algorithm and the related results by dropping all assumptions of the problem, except polyhedral convexity of $F$ and $C$. 

The extension to not necessarily bounded problems is based on some recent results on the existence of solutions of polyhedral convex set optimization problems \cite{Loe23exsol} as well as the concept of the {\em natural ordering cone} of a polyhedral convex set optimization problem \cite{Loe23natcon}, which is recalled in Section \ref{sec:alg_unbounded}. 

The solution concepts we use in this paper are based on the two components {\em infimum attainment} and {\em minimality}. This approach is due to \cite{HeyLoe11} with contributions by Andreas H. Hamel. 
Polyhedral variants of the solution concept have been introduced in \cite{Loe11} for vector optimization problems and in \cite{LoeSch13} for bounded polyhedral convex set optimization problems. 
The solution concept for not necessarily bounded polyhedral convex set optimization problems we use here, see Section \ref{sec:alg_unbounded}, appeared in \cite{Loe23exsol} and was contributed by Andreas H. Hamel and Frank Heyde.

There are two main approaches in the literature (for references see, e.g., \cite{HamHeyLoeRudSch15,KhaTamZal15}) for optimization problems with set-valued objective mappings and an ordering given by a set relation (like the one in \eqref{eq:setrel}):
first the {\em set approach}, which aims to compute all minimizers or some minimizers without any requirement of infimum attainment, and secondly, the {\em complete lattice approach}, where the goal is to compute a set of minimizers which provide attainment of the infimum.
This paper mainly focuses on the complete lattice approach, but the method we present below in Section \ref{sec:minimizer} can also be used to compute minimizers without the requirement of some kind of infimum attainment.
It computes, for an arbitrary point $\bar y \in \PP$ some minimizer $\bar x$ of \eqref{eq:p} with $\bar y \in F(\bar x)+C$. Note that, as shown in \cite{Loe23exsol}, this task can also be realized by solving one single linear program provided an inequality representation of the graph of $F_C$ is known. This is, however, usually not the case and the computation of such a representation is not tractable for problems with many variables. 

To the best of our knowledge, so far there is no solution method known in the literature for unbounded polyhedral convex set optimization problems. Let us discuss some related references.
In \cite{Jahn15}, minimizers are obtained as minimizers of an associated vector optimization problem, however, one with infinitely many (or in the polyhedral case at least with many) objectives. The recent papers \cite{EicQuiRoc22,EicRoc23} provide methods to compute all {\em weakly minimal solutions} (which is in our terminology some kind of {\em weak minimizer}) for set optimization problems by solving an associated vector optimization problem. The set optimization problems in \cite{EicQuiRoc22,EicRoc23} are in many aspects more general than those we consider here. Moreover, different set ordering relations are considered, whereas we consider only one of them. The results can be extended to those other set relations which can be expressed as an inclusion of two sets. In \cite[Theorem 3.3]{EicRoc23} weakly minimal solutions (weak minimizers) of a set optimization problem are characterized by weakly minimal solutions (weak minimizers) of an associated infinite dimensional vector optimization problem. Discretization together with an error analysis yield a method to compute weakly minimal solutions (weak minimizers) by computing weakly minimal solutions of a vector optimization problem with finitely many objectives. In order to point out the impact of this paper, let us boil down the objective of \cite{EicRoc23} to our special setting. Motivated by the characterization in \cite[Proposition 2.10]{EicRoc23} we define a {\em weak minimizer} $\bar x \in \R^n$ of \eqref{eq:p} by the condition
\begin{equation*}
	\not\exists x\in \R^n:\; F(x) + \Int C \supseteq F(\bar x).
\end{equation*}
Below in Section \ref{sec:alg_bounded} we compute for each vertex $\bar y$ of the upper image $\PP$ some minimizer $\bar x$ of \eqref{eq:p} such that $\bar y \in F(\bar x)+ C$. A weak minimizer $\bar x$ of \eqref{eq:p} with $\bar y \in F(\bar x)+C$ is easier to obtain. Since $F(x)+\Int C$ is an open subset of $\PP$, a point $\bar x \in \R^n$ is already a weak minimizer of \eqref{eq:p} if $F(\bar x)$ contains a boundary point of $\PP$. Thus for a vertex $\bar y$ of $\PP$, every $\bar x \in \R^n$ with $\bar y \in F(\bar x)+C$ is a weak minimizer. An example where for a vertex $\bar y$ of $\PP$ some $\bar x$ with $\bar y \in F(\bar x)+C$ is not a minimizer can be found in \cite{LoeSch13}. 

Bibliographic notes on set optimization and its applications can be found, e.g., in \cite{HamHeyLoeRudSch15,KhaTamZal15}. Some further references on solution methods for set optimization problems are provided in \cite{EicRoc23}.

\section{A motivating example}\label{sec:mot}

In this section, we describe a very specific setting that involves set-valued risk measures, as introduced in \cite{JouMebTou04} and further extended in \cite{HamHey07, HamHeyRud11}. The concept of \emph{risk} provides its full potential only in a setting with random variables. However, to maintain simplicity in this example, we replace them by deterministic variables.

Consider \(n\) assets such as stocks, currencies, and commodities. A portfolio of these \(n\) assets can be described by a vector \(x \in \mathbb{R}^n\). A component \(x_i\) represents the units of asset \(i\) in the portfolio. A portfolio is considered to be \emph{solvent} if it can be traded into a nonnegative portfolio under the current market conditions. The set of all solvent portfolios is called the \emph{solvency cone}. It describes the current market conditions for trading the \(n\) assets.

The first \(q\) of the \(n\) assets are considered to be \emph{eligible} for risk compensation. The risk of the portfolio \(x \in \mathbb{R}^n\) in terms of the \(q \leq n\) eligible assets can be seen as the set of all portfolios \(y \in \mathbb{R}^q\) of eligible assets such that
\[
x + \begin{pmatrix} y \\ 0 \end{pmatrix} \in K_2,
\]
where the solvency cone \(K_2 \subseteq \mathbb{R}^n\) describes the market conditions for risk compensation.

We consider a bank that wants to sell a portfolio \(x \in \mathbb{R}^n\) to various customers. The bank has an initial portfolio \(\bar{x}\) which can be traded into the portfolio \(x\) to be sold. For this trade, certain market conditions hold, which are described by a solvency cone \(K_1 \subseteq \mathbb{R}^n\). The bank wants to obtain $x$ from $\bar x$ by trading on the market without adding any capital (but removing capital is possible). This means that any \(x \in \Set{\bar{x}} - K_1\) can be chosen by the bank.

Two decision-making processes are involved. The bank has to choose \(x \in \Set{\bar{x}} - K_1\). The second decision has to be made by the customers. Each customer has to compensate the risk by selecting a portfolio \(y \in \mathbb{R}^q\), i.e., a customer has to choose an element from the set
\[
R(x) \coloneqq \left\{ y \in \mathbb{R}^q \mid x + \begin{pmatrix} y \\ 0 \end{pmatrix} \in K_2 \right\}.
\]
The bank aims to satisfy its customers by maximizing the set \(R(x)\) with respect to set inclusion. This ensures that different customers can compensate the risk in their own way. It provides maximal flexibility to the customers. So the bank has a \emph{preference for flexibility} \cite{Kreps79,HamLoe20}.

It is sufficient to enlarge the set \(R(x) + \mathbb{R}^q_+\) instead of \(R(x)\) because risk compensation by component-wise smaller portfolios of eligible assets is usually preferred. Adding the cone $\R^q_+$ directs attention towards enlargements with respect to these preferred options for risk compensation.

This setting leads to the following set optimization problem:
\[
\text{Maximize } R(x) + \mathbb{R}^q_+ \quad \text{s.t.} \quad x \in \bar{x} - K_1
\]
with respect to set inclusion \(\subseteq\), or equivalently,
\[
\text{Minimize } R(x) + \mathbb{R}^q_+ \quad \text{s.t.} \quad x \in \bar{x} - K_1
\]
with respect to the superset inclusion \(\supseteq\).

For polyhedral convex solvency cones, we obtain a polyhedral convex set optimization problem. In the numerical examples later in Section \ref{sec:num} we see that this setting can even lead to an \emph{unbounded} set optimization problem. This motivates extending the solution methods to this case.

Another field of application is multi-stage stochastic linear programming with multiple objectives as recently discussed in \cite{HamLoe24}.

\section{Computing minimizers}\label{sec:minimizer}

In this section we present the core of all algorithms of this paper. 
If $\bar y$ denotes some point in the upper image $\PP$ of the set optimization problem \eqref{eq:p},
we compute a minimizer $\bar x$ of \eqref{eq:p} such that $\bar y \in F(\bar x)+C$. 

For a point $\bar x \in \dom F$ and a vector $w \in C^*$,
where $C^*\coloneqq \Set{z \in \R^q \given \forall y \in C: z^T y \leq 0}$ denotes the polar cone of $C$,
we consider the linear program
\leqnomode
\begin{gather}\label{eq:lp}\tag{LP$(F,w,\bar x)$}
	 \max_{x,y} w^T y \quad \text{ s.t. } \quad  \left\{ \begin{array}{rcl}
		y & \in & F(x)+C \\
		F(\bar x)+C & \subseteq & F(x)+C\text{.}
	\end{array}\right.
\end{gather}
\reqnomode
Problem \eqref{eq:lp} is feasible for all $\bar x \in \dom F$.
The inclusion in the second constraint can be resolved by computing a {\em V-representation} of the polyhedral convex set $F(\bar x)+ C$, i.e., a representation as the sum of the convex hull of finitely many points and the conic hull of finitely many directions, see, e.g., \cite{CirLoeWei19}.
Since all values of $F$ have the same recession cone, the constraint is satisfied if and only if all points of a V-representation of $F(\bar x)+C$ belong to $F(x)+C$.
The linear program can be used to compute minimizers for \eqref{eq:p}, see Algorithm \ref{alg:minimizer}.
In this section we assume that $C$ is a polyhedral convex cone with nonempty interior.
Below in Section \ref{sec:empty} we also consider ordering cones which are allowed to have empty interior.

A {\em minimal system of outer normals} of a polyhedral convex set $P \subseteq \R^q$ is the family of vectors 
$\Set{w^1,\dots,w^k}$
arising from an inequality representation of $P$ of the form
$$ P = \Set{y \in \R^q \given y^T w^1 \leq \gamma_1, \dots, y^T w^k \leq \gamma_k },\quad \|w^1\|=\ldots=\|w^k\|= 1,$$
where $k$ is minimal.
If the convex polyhedron $P$ has nonempty interior, the vectors $w^i$ are just the uniquely defined normalized outer normals at the facets of $P$.
The case where $P$ is allowed to have empty interior is considered below in Section \ref{sec:empty}.

\begin{algorithm}[ht]
\DontPrintSemicolon
\SetKwFunction{FMinimizer}{ComputeMinimizer}
\SetKwProg{Fn}{Function}{:}{\KwRet $\Set{\bar x}$}
\SetKwInOut{Input}{input}\SetKwInOut{Output}{output}
\Fn{\FMinimizer{$F$, $C$, $\bar y$}}{
		choose $\bar x \in \R^n$ with $\bar y \in F(\bar x)+C$\;\label{alg1_line_2}
		$N \leftarrow \Set{\text{minimal system of outer normals of $F(\bar x)+C$}}$\;
		$W \leftarrow \emptyset$\;
		\Repeat{$N \subseteq W$}
		{
			choose $w \in N \setminus W$\;
			$W \leftarrow W \cup \Set{w}$\;			
			\eIf{\eqref{eq:lp} has an optimal solution}
			{
				assign it to $(x^*,y^*)$
			}
			{
				\KwRet $\emptyset$
			}
			\If{$y^* \not \in F(\bar x) + C$}
			{
				$\bar x \leftarrow x^*$\; \label{line_update}
				$N \leftarrow \Set{\text{minimal system of outer normals of $F(\bar x)+C$}}$\;				
			}			
		}         
}
\caption{Computation of minimizer $\bar x$ with $\bar y \in F(\bar x)+C$.}
\label{alg:minimizer}
\end{algorithm}

\begin{rem} The execution of Line 2 in Algorithm \ref{alg:minimizer} can be achieved through the resolution of a linear program, while the operations outlined in lines 3 and 15 can be realized by solving the (geometric dual of) a vector linear program. An efficient and practical approach consists of using the polyhedral calculus toolbox {\tt bensolve tools} \cite{bt, CirLoeWei19}.	
\end{rem}

\begin{prop}\label{prop:minimizer}
For a polyhedral convex set-valued mapping $F:\R^n \rightrightarrows \R^q$,
a polyhedral convex ordering cone $C\subseteq \R^q$ with nonempty interior and a point $\bar y$ in the upper image $\PP$ of \eqref{eq:p},
the function {\tt ComputeMinimizer} defined in Algorithm \ref{alg:minimizer},
after finitely many steps, either returns a minimizer $\bar x$ of \eqref{eq:p} with the property $\bar y \in F(\bar x) + C$ or an empty set.
The number of iterations is bounded above by the minimal number of inequalities in an inequality representation of the graph of the mapping $F_C$. 
\end{prop}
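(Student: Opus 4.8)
The plan is to prove three things: (i) that each pass through the repeat-loop either adds a normal to $W$ without changing $\bar x$, or replaces $\bar x$ by a "strictly better" point $x^*$ (in the sense that $F(x^*)+C \supsetneq F(\bar x)+C$); (ii) that the termination condition $N \subseteq W$ can only be reached at a $\bar x$ that is a minimizer with $\bar y \in F(\bar x)+C$, or else the algorithm exits via the $\emptyset$-branch; and (iii) a monotonicity/finiteness argument bounding the total number of iterations by the number of facets of $\gr F_C$.

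First I would verify the invariant that $\bar y \in F(\bar x)+C$ is maintained. Initially this holds by the choice in Line~\ref{alg1_line_2}. When $\bar x$ is updated to $x^*$ in Line~\ref{line_update}, the feasibility constraints of \eqref{eq:lp} give $F(\bar x)+C \subseteq F(x^*)+C$, so the old $\bar y$, which lay in $F(\bar x)+C$, still lies in the new $F(\bar x)+C = F(x^*)+C$; this also shows the containment chain of the sets $F(\bar x)+C$ is nondecreasing across the whole run, and it is strictly increasing precisely at those iterations where the update is triggered, i.e. where $y^* \notin F(\bar x)+C$. Here I would invoke the standard fact that every $F(x)+C$ is a polyhedron with the same recession cone $C$ (since $\gr F$ is a polyhedron, its $x$-slices share a recession cone), so that $F(\bar x)+C \subsetneq F(x^*)+C$ and $\bar y$ shared imply we have genuinely enlarged the set. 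Next I would argue that $x^*$ is always feasible-to-become-the-new-$\bar x$ in the sense that the linear program \eqref{eq:lp} built from it is again feasible (Problem \eqref{eq:lp} is feasible for all $\bar x \in \dom F$, as stated), so the algorithm is well-defined.

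The crux of the correctness half is the termination analysis. Suppose the loop exits with $N \subseteq W$; let $\bar x$ be the current point and $N = \{w^1,\dots,w^k\}$ its minimal system of outer normals, so $F(\bar x)+C = \bigcap_i \{y : (w^i)^T y \le \gamma_i\}$. Because $N \subseteq W$, for each $w^i$ the program (LP$(F,w^i,\bar x)$) was solved at the last moment $w^i$ was drawn with the \emph{current} $\bar x$ (any earlier update would have re-drawn $N$ and emptied the relevant part of $W$ — I would make this bookkeeping precise), it had an optimal solution $(x^*,y^*)$, and — since $\bar x$ was not subsequently updated — $y^* \in F(\bar x)+C$, whence the optimal value satisfies $(w^i)^T y^* \le \gamma_i$; but feasibility also forces $(w^i)^T y^* \ge (w^i)^T y$ for every $y \in F(\bar x)+C$, so the optimal value is exactly $\gamma_i$. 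Thus for \emph{no} $x \in \R^n$ and $y \in F(x)+C \supseteq F(\bar x)+C$ can one have $(w^i)^T y > \gamma_i$; since this holds for all facet normals $w^i$, any $F(x)+C$ containing $F(\bar x)+C$ is contained in $\bigcap_i\{(w^i)^Ty\le\gamma_i\} = F(\bar x)+C$, i.e. $F(x)+C = F(\bar x)+C$. Hence no $x$ gives $F(x)+C \supsetneq F(\bar x)+C$, which is exactly \eqref{eq:minimizer}: $\bar x$ is a minimizer, and $\bar y \in F(\bar x)+C$ by the invariant. The branch returning $\emptyset$ occurs only when some \eqref{eq:lp} has no optimal solution; since it is feasible, this means $w^T y$ is unbounded above on the feasible set, and one checks this certifies $\bar y \notin \PP$ (a direction in the feasible set along which $w^T y \to \infty$ produces, via the $\supseteq$-constraint, unboundedness of $\PP$ in a direction $w$ normal to a supporting halfspace through $\bar y$, contradicting $\bar y \in \PP$); so the $\emptyset$ output is consistent with the hypothesis $\bar y \in \PP$ and in fact, under that hypothesis, never occurs.

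Finally, for finiteness and the iteration bound: between two consecutive updates of $\bar x$ the set $W$ strictly grows and is bounded by $|N| < \infty$, so only finitely many iterations occur without an update; and each update strictly enlarges the polyhedron $F(\bar x)+C$. To bound the number of updates, I would track the face of $\gr F_C$ exposed by $\bar x$: the relevant monotone quantity is that passing from $\bar x$ to $x^*$ with $F(\bar x)+C \subsetneq F(x^*)+C$ strictly decreases the number of facets of $\gr F_C$ that are "active" in cutting down to $F(\bar x)+C$ — equivalently, $F(x^*)+C$ is obtained by dropping at least one of the binding inequalities from the representation of $\gr F_C$ restricted to the slice, so after at most (number of facets of $\gr F_C$) updates no facet remains to be dropped and $F(\bar x)+C$ equals $\PP$. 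Combining, the total iteration count is at most the minimal number of inequalities in an H-representation of $\gr F_C$, as claimed. The main obstacle I anticipate is the last step: making the "a strict enlargement of $F(\bar x)+C$ corresponds to deactivating a facet of $\gr F_C$" argument airtight — this needs a clean lemma relating the facets of a polyhedron's slice (or projection-type operation) to the facets of the polyhedron itself, uniformly over the monotone chain of slices produced by the algorithm, and I would likely isolate it as a separate lemma before proving the proposition.
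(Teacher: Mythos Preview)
Your correctness argument is close in spirit to the paper's, but there are two genuine gaps and one missed simplification.

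First, the bookkeeping in the termination analysis is off: $W$ is \emph{never} reset when $\bar x$ is updated (only $N$ is), so a normal $w^i$ in the final $N$ may well have been processed while $\bar x$ was a strictly earlier point $\bar x(w^i)$. Your parenthetical about ``emptying the relevant part of $W$'' does not describe what the algorithm does. The paper handles this via the monotone chain you already noted, $F(\bar x(w^i))+C \subseteq F(\bar x)+C$, together with a short case split on whether $y^*(w^i)$ triggered an update, to conclude $y^*(w^i) \in F(\bar x)+C$; then the hypothetical $(\tilde x,\tilde y)$ is feasible for the \emph{earlier} LP and the contradiction goes through. Your argument can be repaired along these lines, but as written it assumes something false.

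Second, your analysis of the $\emptyset$-branch is incorrect: it is \emph{not} true that $\bar y \in \PP$ forces every \eqref{eq:lp} to have an optimal solution. The proposition deliberately asserts only ``either returns a minimizer \ldots\ or an empty set''; whether a minimizer exists is a separate question (settled later in the paper via the natural ordering cone, see Theorem~\ref{thm1}), and the LP can be unbounded even with $\bar y \in \PP$. Your sketch that unboundedness ``contradicts $\bar y \in \PP$'' does not hold, since $\PP$ itself is typically unbounded (its recession cone contains $C$). Do not claim this branch is vacuous.

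Third --- and this is where you are working much harder than necessary --- the finiteness argument admits a one-line proof that you are missing. Write $\gr F_C = \{(x,y): Ax+By\le b\}$ with $m$ rows; then for \emph{every} $\bar x$ one has $F(\bar x)+C = \{y: By \le b-A\bar x\}$, so every minimal system of outer normals $N$ computed in the algorithm is contained in the fixed finite set $M$ of normalized rows of $B$, with $|M|\le m$. Since each iteration moves one element of $N\setminus W\subseteq M\setminus W$ into $W$ and $W$ only grows, after at most $m$ iterations $N\subseteq M\subseteq W$ and the loop exits. No ``deactivation of facets'' lemma is needed, and the bound by the minimal number of inequalities of $\gr F_C$ falls out directly. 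This is exactly the argument in the paper, and it replaces the part of your plan you yourself flagged as the main obstacle.
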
 

\begin{proof} Assume $\bar x$ is returned but is not a minimizer of \eqref{eq:p}.
	Then there is $\tilde x \in \R^n$ such that $F(\tilde x)+C \supsetneq F(\bar x)+C$. 
	In particular, there is some $\tilde y \in F(\tilde x)+C$ with $\tilde y \not\in F(\bar x)+C$.
	After a regular termination of the loop, the set $N \subseteq W$ is a system of outer normals of $F(\bar x)+C$. 
	Hence there is $\tilde w \in N$ such that
	\begin{equation}\label{eq:dota0}
		\tilde w^T \tilde y > \sup\Set{\tilde w^T z \given z \in F(\bar x)+C} \eqqcolon \tilde \gamma.
	\end{equation}
	Since $\tilde w \in W$, some linear program (LP($F, \tilde w, \bar x(\tilde w)$)) has been solved in some iteration of the loop.
	Its optimal solution (which exists as some $\bar x$ was returned) is denoted by $(x^*(\tilde w),y^*(\tilde w))$. 
	Because of the second constraint in the linear program we can show inductively that
	\begin{equation}\label{eq:dota1}
	F(\bar x(\tilde w))+ C \subseteq F(\bar x)+C
	\end{equation}
	holds. 
	We next show that
	\begin{equation}\label{eq:dota2}
		y^*(\tilde w) \in F(\bar x)+C.
	\end{equation}
	If $y^*(\tilde w) \in F(\bar x(\tilde w))+ C$, this follows from \eqref{eq:dota1}. 
	Otherwise, if $y^*(\tilde w) \not\in F(\bar x(\tilde w))+ C$, the update step in line \ref{line_update} is executed,
	which yields $y^*(\tilde w) \in F(x^*(\tilde w))+ C \subseteq F(\bar x)+C$ by similar arguments as used to show \eqref{eq:dota1}.
	Therefore, \eqref{eq:dota2} holds in any case.
	It implies that the optimal value $\tilde w^T y^*(\tilde w)$ of (LP($F, \tilde w, \bar x(\tilde w)$)) is
	not larger than $\tilde\gamma$ defined in \eqref{eq:dota0}.
	From \eqref{eq:dota1}, we conclude that $(\tilde x,\tilde y)$ is feasible for (LP($F, \tilde w, \bar x(\tilde w)$)).
	Thus \eqref{eq:dota2} yields a contradiction because the value $\tilde w^T \tilde y$ is larger than the optimal value.
	
	Before the loop is entered, we have $\bar y \in F(\bar x) + C$. This property is maintained by the second constraint of the linear program.

	It remains to show that the loop terminates after finitely many steps for each element $\bar y$ of $\PP$. 
	The graph of the set-valued mapping $F_C$ can be expressed by a finite number $m$ of affine inequalities, say 
	$$ \gr F_C = \Set{(x,y) \in \R^n\times \R^q \given A x + B y \leq b}.$$
	The values of $F_C$ can be represented as
	$$ F(\bar x)+ C = \Set{y \in \R^q \given B y \leq b-A \bar x}.$$
	Let $M$ be the set of all normalized column vectors of $B^T$. 
	Since $C$ has nonempty interior, the interior of $F(\bar x)+C$ is nonempty during the algorithm.
	A minimal system $N$ of outer normals of $F(\bar x)+C$ consists of all normalized facet outer normals and thus must be contained in $M$.
	Thus, for every linear program \eqref{eq:lp} solved in the algorithm, the parameter $w$ belongs to $M$.
	The algorithm ensures that at most one linear program is solved for each $w \in M$.
	Therefore the loop terminates after at most $m$ iterations.
\end{proof}

\section{Simplified algorithm for the bounded case} \label{sec:alg_bounded}

In this section we derive a simplified variant of the solution method of \cite{LoeSch13}.
Suppose problem \eqref{eq:p} is bounded. Then the linear program \eqref{eq:lp} has an optimal solution whenever $w \in C^*$ and $\bar x \in \dom F$.
Let the polyhedral convex ordering cone $C\subseteq \R^q$ have nonempty interior. Then Proposition \ref{prop:minimizer} implies that for every $\bar y \in \PP$ the function {\tt ComputeMinimizer} in Algorithm \ref{alg:minimizer} returns a minimizer $\bar x$ with $\bar y \in F(\bar x)+C$. Note that $\bar y \in \PP$ implies $\bar x \in \dom F$ whereas $w \in C^*$ follows from the fact that every parameter $w$ in Algorithm \ref{alg:minimizer} is an outer normal of a nonempty set of the form $F(\bar x) + C$.

Let us assume as in \cite{LoeSch13}, but just for simplicity, that $C$ is free of lines (see Section \ref{sec:alg_unbounded} for the case of more general cones). Then the upper image $\PP$ has a vertex and can be expressed by its vertices $\vertices(\PP)$ as
$$ \PP = \vertices(\PP) + C .$$
The set $\bar S$ computed by Algorithm \ref{alg:bounded} has the property that the set 
$$ C + \conv \bigcup_{\bar x\in \bar S} F(\bar x)$$
contains all vertices of $\PP$. Together this yields the infimum attainment property \eqref{eq:infimizer}. Since all elements of $\bar S$ are minimizers, $\bar S$ is a solution to \eqref{eq:p}. 

These arguments provide the following statement.
 
\begin{prop}
	For a feasible bounded problem \eqref{eq:p} with a polyhedral convex ordering cone $C$ which has nonempty interior and is free of lines, Algorithm \ref{alg:bounded} is correct and terminates after finitely many steps.
\end{prop}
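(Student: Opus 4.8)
The plan is to reduce everything to Proposition~\ref{prop:minimizer} together with the finiteness of the vertex set of the upper image. As noted in the text preceding the statement, boundedness of \eqref{eq:p} and line-freeness of $C$ imply that the nonempty polyhedron $\PP$ is line-free with recession cone $C$; hence $\PP$ has finitely many vertices and equals the sum of $C$ and the convex hull of those vertices, $\PP = \conv\vertices(\PP) + C$. Algorithm~\ref{alg:bounded} computes $\vertices(\PP)$ --- a finite procedure, realizable by solving (the geometric dual of) the vector linear program associated with \eqref{eq:p} --- and then, looping over the finitely many $\bar y \in \vertices(\PP)$, calls {\tt ComputeMinimizer}$(F,C,\bar y)$ and collects the returned points in $\bar S$.

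I would next check that each such call returns an actual minimizer and not $\emptyset$. Because \eqref{eq:p} is bounded, \eqref{eq:lp} has an optimal solution for every $\bar x \in \dom F$ and every $w \in C^*$; as observed in the text, every parameter $w$ arising in Algorithm~\ref{alg:minimizer} is an outer normal of a nonempty set of the form $F(\bar x)+C$ and therefore lies in $C^*$, while $\bar y \in \PP$ guarantees that the point chosen in Line~\ref{alg1_line_2} lies in $\dom F$. Hence the $\emptyset$-branch of Algorithm~\ref{alg:minimizer} is never taken, and by Proposition~\ref{prop:minimizer} each call terminates after finitely many steps and returns a minimizer $\bar x$ of \eqref{eq:p} with $\bar y \in F(\bar x)+C$. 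Consequently every element of $\bar S$ satisfies the minimality condition \eqref{eq:minimizer}, and, being a finite loop over finitely many terminating subroutines (plus the finite vertex computation), the whole algorithm terminates after finitely many steps.

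It remains to verify the infimum attainment \eqref{eq:infimizer}. Write $\bar S = \Set{x^1,\dots,x^k}$, where $x^i$ is the point returned for the vertex $\bar y^i$, so $\bar y^i \in F(x^i)+C$, and set $Q \coloneqq C + \conv\bigcup_{i} F(x^i)$, which is convex and satisfies $Q + C = Q$ since $C$ is a convex cone. For the inclusion $Q \subseteq \PP$, note $F(x^i) \subseteq \PP$ by the definition of the upper image (using $0 \in C$), hence $\conv\bigcup_i F(x^i) \subseteq \PP$ by convexity of $\PP$, and then $Q \subseteq \PP + C = \PP$. For the reverse inclusion, $\vertices(\PP) = \Set{\bar y^1,\dots,\bar y^k} \subseteq \bigcup_i \bigl(F(x^i)+C\bigr) \subseteq Q$, so by convexity of $Q$ and $Q+C=Q$ we obtain $\PP = \conv\vertices(\PP)+C \subseteq Q$. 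Hence $\PP = Q$, i.e. \eqref{eq:infimizer} holds, so $\bar S$ is an infimizer consisting of minimizers, that is, a solution of \eqref{eq:p}.

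The argument is essentially bookkeeping on top of Proposition~\ref{prop:minimizer}; the only points that genuinely need attention are ruling out the $\emptyset$-return --- i.e. checking that boundedness really makes every instance of \eqref{eq:lp} encountered solvable --- and handling the passage from the vertices of $\PP$ to the convex hull correctly, which works precisely because $\PP = \conv\vertices(\PP)+C$ and the set $Q$ is invariant under adding $C$.
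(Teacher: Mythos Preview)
Your proof is correct and follows essentially the same approach as the paper: the paper's argument (given in the paragraphs preceding the proposition) likewise invokes Proposition~\ref{prop:minimizer} to ensure {\tt ComputeMinimizer} returns minimizers, uses line-freeness of $C$ to write $\PP = \vertices(\PP) + C$, and deduces infimum attainment from the fact that the computed set covers all vertices of~$\PP$. You spell out the two inclusions for \eqref{eq:infimizer} more explicitly than the paper does, but the logical content is the same.
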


\begin{algorithm}[ht]
\DontPrintSemicolon
\SetKwFunction{FMinimizer}{ComputeMinimizer}
\SetKwInOut{Input}{input}\SetKwInOut{Output}{output}
\Input{$F:\R^n \rightrightarrows \R^q$ (objective mapping), $C \subseteq \R^q$ (ordering cone)}
\Output{solution $\bar S$ of \eqref{eq:p}}
\Begin{
	$\bar S \leftarrow \emptyset$\;
	\For{$y \in \vertices \PP$}{
		$\bar S \leftarrow \bar S \; \cup$ \FMinimizer{$F$, $C$, $y$}\;
	}
}
\caption{Solution method for bounded problems}
\label{alg:bounded}
\end{algorithm}

The computation of the vertices of $\PP$ can be realized by solving a vector linear program, the so-called {\em vectorial relaxation}, which is the problem to $C$-minimize the vector valued function $(x,y) \mapsto y$ under the polyhedral convex constraint $y \in F(x)$. This vector linear program has the same upper image as $\eqref{eq:p}$, see \cite{LoeSch13,LoeSch15}.

\section{Extension to unbounded problems}\label{sec:alg_unbounded}

The aim of this section is to drop the boundedness assumption for \eqref{eq:p} and to derive a solution method for not necessarily bounded polyhedral convex set optimization problems. This problem class has been studied in \cite{HeyLoe,Loe23exsol,Loe23natcon}. It extends the class of not necessarily bounded vector optimization problems, see, e.g., \cite{Loe11}.

In contrast to scalar linear programming, where optimal solutions exist for bounded problems only, in vector linear programming and in polyhedral convex set optimization it makes sense to define (optimal) solutions also for some unbounded problems \cite{Loe11,Loe23exsol,Loe23natcon}. Such a solution of an unbounded problem, if it exists, consists of two components: points as in the bounded case and additionally directions. Most of the new aspects can be described by the {\em recession mapping} of $F$ and by the {\em homogeneous problem} associated to \eqref{eq:p}. 

The recession cone of $\gr F$, denoted by $0^+ \gr F$, is again the graph of a set-valued mapping, which is called the \emph{recession mapping} of $F$.
It is denoted by $G:\R^n\rightrightarrows \R^q$ and defined by the equation
$$ \gr G = 0^+\gr F.$$
If $F$ is replaced by $G$, we obtain the \emph{homogeneous problem}
\leqnomode
\begin{gather}\label{eq:q}\tag{P$\homog$}
  \min G(x)+C \quad \text{ s.t. }\; x \in \R^n.
\end{gather}
\reqnomode
with upper image
$$ \QQ \coloneqq C + \bigcup_{x \in \R^n} G(x).$$
One can show that $\QQ = 0^+ \PP$, see, e.g., \cite{HeyLoe}. Moreover, we have $G(0) = 0^+F(x)$ for all $x \in \dom F$, see, e.g., \cite[Proposition 2]{HeyLoe}.

The following definition of a solution for \eqref{eq:p} is due to \cite{Loe23exsol}\footnote{contributed by Andreas H. Hamel and Frank Heyde}.
A tuple $(\bar{S},\hat{S}$) of finite sets $\bar{S} \subseteq \dom F$,
$\bar{S} \neq \emptyset$ and $\hat{S} \subseteq \dom G \setminus \{0\}$ is called a {\em finite infimizer} \cite{HeyLoe} for problem \eqref{eq:p} if 
\begin{equation*}
	\PP \subseteq C + \conv \bigcup_{x \in \bar{S}} F(x) + \cone \bigcup_{x \in \hat{S}} G(x),
\end{equation*} 
where $\cone Y$ denotes the \emph{conic hull}, i.e.,
the smallest (with respect to $\subseteq$) convex cone containing $Y$.
We set $\cone \emptyset \coloneqq \Set{0}$.
An element $\bar{x} \in \dom F$ is called {\em minimizer} or {\em minimizing point} of \eqref{eq:p} if 
$$ \not\exists x \in \R^n:\; F(x) + C \supsetneq F(\bar x)+C.$$
A nonzero element $\hat{x} \in \dom G$ is called {\em minimizer} or {\em minimizing direction} for \eqref{eq:p} if
$$ \not\exists x \in \R^n:\; G(x)+C \supsetneq G(\hat x)+C.$$
A finite infimizer $(\bar{S},\hat{S})$ is defined to be a {\em solution} to \eqref{eq:p} if all elements of  $\bar{S}$ are minimizing points and all elements of $\hat{S}$ (if any) are minimizing directions.

Conditions which characterize the existence of solutions have been established in \cite{Loe23exsol,Loe23natcon}. The \emph{natural ordering cone} \cite{Loe23natcon} of a polyhedral convex set-valued mapping $F: \R^n\rightrightarrows \R^q$, defined as 
$$ K \coloneqq  K_F \coloneqq \Set{y \in \R^q \given \exists x \in \R^n:\; y \in G(x),\; 0 \in G(x)},$$
provides an elegant way to formulate conditions being equivalent to the existence of a solution to \eqref{eq:p}.

The homogeneous linear program associated to \eqref{eq:lp}, obtained by setting all right hand side entries of the constraints to zero, is
\leqnomode
\begin{gather}\label{eq:lphom}\tag{LP$\homog(F,w)$}
	 \max_{x,y} w^T y \quad \text{ s.t. } \quad  \left\{ \begin{array}{rcl}
		y & \in & G(x)+C \\
		0 & \in & G(x)+C\text{.}
	\end{array}\right.
\end{gather}
\reqnomode
It can be interpreted as maximizing $w^T y$ subject to $y$ belonging to the natural ordering cone of the mapping $F_C$. 

The following two propositions are used to prove Theorem \ref{thm1} below.
The first one is a fact about the existence of optimal solutions of a linear program, which follows, for instance, by linear programming duality.
The second one, taken from \cite{Loe23exsol}, characterizes the existence of solutions of a polyhedral convex set optimization problem by the property that the origin is a minimizer of the homogeneous problem.

\begin{prop}\label{prop:lp}
	A linear program has an optimal solution if and only if it is feasible and its homogeneous problem is bounded.
\end{prop}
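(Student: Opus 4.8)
The plan is to prove the two implications separately, after writing the linear program in the generic form ``maximize $c^T x$ over $F \coloneqq \Set{x \in \R^N \given Ax \leq b}$'' (equalities, if present, are split into two inequalities). Then the associated homogeneous problem is ``maximize $c^T x$ over $F_0 \coloneqq \Set{x \in \R^N \given Ax \leq 0}$'', and $F_0$ is a convex cone that always contains the origin and, when $F \neq \emptyset$, coincides with the recession cone $0^+ F$. In particular the homogeneous problem is always feasible, so for it ``bounded'' simply means that the supremum of $c^T x$ over $F_0$ is finite.

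For the ``only if'' direction, suppose the linear program has an optimal solution $x^*$. Then it is feasible. If its homogeneous problem were unbounded there would be a $d$ with $Ad \leq 0$ and $c^T d > 0$; but then $x^* + t d \in F$ for every $t \geq 0$ while $c^T(x^* + t d) \to +\infty$, contradicting optimality of $x^*$. Hence the homogeneous problem is bounded.

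For the ``if'' direction, suppose $F \neq \emptyset$ and the homogeneous problem is bounded. Since $F_0$ is a cone containing $0$, a finite value of $\sup_{x \in F_0} c^T x$ is necessarily $0$, so $c^T d \leq 0$ for every $d$ with $Ad \leq 0$. By Farkas' lemma this is equivalent to the solvability of $A^T y = c$, $y \geq 0$, that is, to feasibility of the dual program ``minimize $b^T y$ subject to $A^T y = c$, $y \geq 0$''. Now the primal is feasible and the dual is feasible, so weak duality yields $c^T x \leq b^T y$ for every primal-feasible $x$ and every dual-feasible $y$; hence $c^T x$ is bounded above on $F$, and a feasible linear program with bounded objective attains its maximum. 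Therefore the linear program has an optimal solution.

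The argument is routine apart from the reduction in the ``if'' direction, which is the step I expect to need the most care: recognizing that ``the homogeneous problem is bounded'' is exactly the statement that $c$ lies in the cone generated by the rows of $A$, and converting this (via Farkas' lemma) into feasibility of the dual. If one prefers to bypass duality, an alternative is to use the Minkowski--Weyl decomposition $F = \conv V + \cone R$ with $V, R$ finite: boundedness of the homogeneous problem forces $c^T r \leq 0$ for all $r \in R$, whence $c^T x \leq \max_{v \in V} c^T v$ for all $x \in F$, and any maximizing $v \in V$ is then an optimal solution.
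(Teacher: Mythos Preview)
Your proof is correct. The paper does not actually prove this proposition; it merely states that it ``follows, for instance, by linear programming duality,'' which is precisely the route you take in the ``if'' direction (Farkas' lemma plus weak duality), while your ``only if'' direction is the standard ray argument. Your alternative via the Minkowski--Weyl decomposition is also valid and arguably more self-contained, but either way you have supplied more detail than the paper does.
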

%

\begin{prop}[{\cite[Theorem 3.3]{Loe23exsol}}]\label{prop:ex}
	Let $F:\R^n\rightrightarrows \R^q$ be a polyhedral convex set-valued mapping with recession mapping $G$ and let $C \subseteq \R^q$ be a polyhedral convex cone. 
	Then \eqref{eq:p} has a solution if and only if it is feasible and
	$$ \not\exists x \in \R^n:\; G(x) + C \supsetneq G(0)+C.$$
\end{prop}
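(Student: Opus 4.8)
The plan is to prove the two implications separately: necessity by a direct perturbation with a recession direction, and sufficiency by showing that the stated condition is exactly what makes the linear programs of Section~\ref{sec:minimizer} solvable, so that the vertices of $\PP$ and the generators of $\QQ$ can be realized by minimizers.

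For necessity, suppose $(\bar S,\hat S)$ is a solution; then $\emptyset\neq\bar S\subseteq\dom F$, so \eqref{eq:p} is feasible, and I would argue by contradiction. Assume $G(\tilde x)+C\supsetneq G(0)+C$ for some $\tilde x$ and fix a minimizing point $\bar x\in\bar S$. From $\gr G=0^+\gr F$ one gets $F(\bar x)+tG(\tilde x)\subseteq F(\bar x+t\tilde x)$ for all $t\geq 0$, while $0^+(F(\bar x)+C)=G(0)+C$ together with $0\in G(0)$ gives $F(\bar x)+C+tG(0)=F(\bar x)+C$; hence $F(\bar x+t\tilde x)+C\supseteq F(\bar x)+C$ for every $t\geq 0$. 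Picking $y^*\in(G(\tilde x)+C)\setminus(G(0)+C)$, the polyhedron $F(\bar x)+C+\R_+y^*$ has recession cone $(G(0)+C)+\R_+y^*$, which strictly contains $G(0)+C$ and hence forces $F(\bar x)+C+\R_+y^*$ to strictly contain $F(\bar x)+C$; so there are $q\in F(\bar x)+C$ and $t_0>0$ with $q+t_0y^*\notin F(\bar x)+C$. Since $t_0y^*\in t_0(G(\tilde x)+C)=G(t_0\tilde x)+C$, we get $q+t_0y^*\in F(\bar x)+G(t_0\tilde x)+C\subseteq F(\bar x+t_0\tilde x)+C$, so $F(\bar x+t_0\tilde x)+C\supsetneq F(\bar x)+C$, contradicting minimality of $\bar x$.

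For sufficiency, assume \eqref{eq:p} feasible and that no $x$ satisfies $G(x)+C\supsetneq G(0)+C$. The first step is the observation that this forces $K\subseteq G(0)+C$ for the natural ordering cone $K=K_{F_C}$ of $F_C$: if $y\in G(x)$ with $0\in G(x)$ but $y\notin G(0)+C$, then $0\in G(x)$ yields $G(0)=0^+G(x)\subseteq G(x)$, so $G(x)+C\supseteq G(0)+C$ with $y$ witnessing that the inclusion is proper --- a contradiction. Thus $(G(0)+C)^*\subseteq K^*$. Next, since \eqref{eq:p} is feasible, every nonempty value $F(\bar x)+C$ has recession cone $G(0)+C$, so each of its outer normals $w$ lies in $(G(0)+C)^*\subseteq K^*$; but then the homogeneous problem \eqref{eq:lphom} of the linear program \eqref{eq:lp} --- which amounts to maximizing $w^Ty$ over $y\in K$ --- is bounded, and as \eqref{eq:lp} is always feasible, Proposition~\ref{prop:lp} ensures it has an optimal solution. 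Consequently the branch of Algorithm~\ref{alg:minimizer} that returns the empty set is never reached, and by Proposition~\ref{prop:minimizer} the function \texttt{ComputeMinimizer}$(F,C,\bar y)$ returns, for every $\bar y\in\PP$, a minimizing point $\bar x$ with $\bar y\in F(\bar x)+C$. Running it on every vertex of $\PP$ yields a finite set $\bar S$ of minimizing points with $\conv(\vertices\PP)+C\subseteq\conv\bigcup_{\bar x\in\bar S}F(\bar x)+C$.

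The recession part is the delicate half and the place I expect the main obstacle. The recession mapping of $G$ is $G$ itself, $0\in\dom G$ always, and the hypothesis is unchanged if $F$ is replaced by $G$; so the same argument applies to \eqref{eq:q}, and running \texttt{ComputeMinimizer} with $G$ in place of $F$ on each generator $d$ of $\QQ=0^+\PP$ with $d\notin G(0)+C$ yields a nonzero $\hat x_d\in\dom G$ which is a minimizing point of \eqref{eq:q}, i.e.\ a minimizing direction of \eqref{eq:p}, with $d\in G(\hat x_d)+C$. Setting $\hat S\coloneqq\Set{\hat x_d}$ and using $0^+\bigl(\conv\bigcup_{\bar x\in\bar S}F(\bar x)\bigr)=G(0)\subseteq\QQ$ together with $\PP=\conv(\vertices\PP)+\QQ$, one then checks that $(\bar S,\hat S)$ is a finite infimizer all of whose entries are minimizers, i.e.\ a solution. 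The work to be done here is making this bookkeeping tight --- that replacing a generator $d$ of $\QQ$ by the attained value $G(\hat x_d)+C$ loses nothing and that the homogeneous routine genuinely terminates with minimizing directions --- and, more technically, Proposition~\ref{prop:minimizer} and the vertex/generator enumeration presuppose that $C$ (hence $\PP$ and $\QQ$) is line-free with nonempty interior, so in general one first passes to the quotient by the lineality space of $C$ and restricts to its affine hull, a reduction carried out in \cite{Loe23exsol} and, in algorithmic form, in Sections~\ref{sec:std} and~\ref{sec:empty} below.
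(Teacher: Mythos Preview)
The paper does not prove Proposition~\ref{prop:ex}; it is imported from \cite[Theorem~3.3]{Loe23exsol} and then used as a black box in the proof of Theorem~\ref{thm1}. Your necessity argument is correct and self-contained. Your sufficiency argument is an interesting \emph{inversion} of the paper's logic: the paper invokes Proposition~\ref{prop:ex} to show that the linear programs of Algorithm~\ref{alg:minimizer} are solvable (Theorem~\ref{thm1}) and hence that the algorithm delivers a solution (Theorem~\ref{thm2}), whereas you argue directly that the hypothesis forces $K_{F_C}\subseteq G(0)+C$, whence each \eqref{eq:lphom} is bounded for the relevant $w$, whence by Propositions~\ref{prop:lp} and~\ref{prop:minimizer}---both proved independently of Proposition~\ref{prop:ex}---the algorithm runs and produces a solution. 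This is not circular.

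There is, however, a genuine gap in the general case. Proposition~\ref{prop:minimizer} assumes $\Int C\neq\emptyset$, and your vertex enumeration needs $\PP$ line-free, while Proposition~\ref{prop:ex} carries neither hypothesis. You flag this and defer to reductions ``in \cite{Loe23exsol} and, in algorithmic form, in Sections~\ref{sec:std} and~\ref{sec:empty}''. The first deferral is to the very paper that proves the proposition, so it yields no independent argument. The second is delicate: Proposition~\ref{prop:aff} is \emph{stated} under the hypothesis that \eqref{eq:p} already has a solution, so citing it verbatim is circular. Its finiteness proof (the dimension of $F(\bar x)+C$ strictly increases at each update, hence at most $q$ updates) does not actually need that hypothesis once you have already shown that every \eqref{eq:lp} has an optimal solution, so your route \emph{can} be closed---but not by a bare citation. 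A minor slip: having set $K=K_{F_C}$, the membership conditions you then test should read ``$y\in G(x)+C$, $0\in G(x)+C$'' rather than ``$y\in G(x)$, $0\in G(x)$''; only the former lets $0^+(G(x)+C)=G(0)+C$ deliver the required strict inclusion.
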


The following theorem characterizes the existence of solutions of the polyhedral convex set optimization problem by a condition for the natural ordering cone and by the solution behavior of the linear programs which occur in the minimizer computation method in Algorithm \ref{alg:minimizer}. Related results were obtained in \cite{Loe23natcon}. In contrast, we suppose condition \eqref{eq:g0} here, which allows a significant simplification. Below in Section \ref{sec:std} we discuss how the original polyhedral convex set optimization \eqref{eq:p} can be reformulated into a {\em standard form} in which \eqref{eq:g0} is always satisfied. 

\begin{thm}\label{thm1}
	Consider problem \eqref{eq:p} where the set-valued objective mapping $F:\R^n\rightrightarrows \R^q$ is polyhedral convex, $G$ denotes its recession mapping and the ordering cone $C \subseteq \R^q$ is polyhedral convex. Let the objective mapping and the ordering cone be connected by satisfying the condition
	\begin{equation}\label{eq:g0}
		 G(0) = C\text{,}
	\end{equation}
	and let \eqref{eq:p} be feasible. Then, the following statements are equivalent.
	\begin{enumerate}[(i)]
		\item \eqref{eq:p} has a solution.
		\item $K \subseteq C$.
		\item For all $w \in C^*$ and all $\bar x \in \dom F$, \eqref{eq:lp} has an optimal solution.
	\end{enumerate} 
\end{thm}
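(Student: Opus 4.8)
The plan is to prove the two equivalences (i) $\Leftrightarrow$ (ii) and (ii) $\Leftrightarrow$ (iii) separately, the first from Proposition \ref{prop:ex}, the second from Proposition \ref{prop:lp}, in both cases after using condition \eqref{eq:g0} to reduce the recession data and the auxiliary linear programs to their simplest form.

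First I would record the consequences of \eqref{eq:g0}. Since $\gr G = 0^+\gr F$ is a polyhedral cone, every nonempty value $G(x)$ is a polyhedron whose recession cone equals $G(0)$; by \eqref{eq:g0} this recession cone is $C$, so $G(x)+C = G(x)$ for every $x \in \dom G$. Moreover $0 \in G(0) = C$, hence $G(0) \subseteq K$ and in particular $C \subseteq K$; and whenever $0 \in G(x)$ one has both $C = 0^+G(x) \subseteq G(x)$ and $G(x) \subseteq K$. Substituting $G(0)+C = C$ and $G(x)+C = G(x)$ into Proposition \ref{prop:ex} and using feasibility of \eqref{eq:p}, statement (i) becomes: no $x$ satisfies $G(x) \supsetneq C$. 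This is equivalent to (ii): if $K \subseteq C$ while $G(x) \supsetneq C$ for some $x$, then $0 \in C \subseteq G(x)$ forces $G(x) \subseteq K \subseteq C$, contradicting $G(x) \ne C$; conversely, if no value $G(x)$ properly contains $C$, then any $y \in K$ lies in some $G(x)$ with $0 \in G(x)$, so $C \subseteq G(x)$, hence $G(x) = C$ and $y \in C$.

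For (ii) $\Leftrightarrow$ (iii) I would use that \eqref{eq:lp} is feasible for every $\bar x \in \dom F$ and invoke Proposition \ref{prop:lp}: \eqref{eq:lp} has an optimal solution precisely when its homogeneous problem \eqref{eq:lphom} is bounded above. The essential point is that \eqref{eq:lphom} does not depend on $\bar x$, and that under \eqref{eq:g0} the constraints $y \in G(x)+C$, $0 \in G(x)+C$ reduce to $y \in G(x)$, $0 \in G(x)$, so the feasible points $y$ of \eqref{eq:lphom} are exactly those of the natural ordering cone $K$. Because $K$ is a convex cone, $\max\{w^Ty : y \in K\}$ is finite iff $w \in K^*$. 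Thus (iii) is equivalent to $C^* \subseteq K^*$, and applying the polar operation --- which is inclusion-reversing and, for the polyhedral convex cones $C$ and $K$, involutive --- this is equivalent to $K \subseteq C$, i.e. to (ii).

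The recession-cone bookkeeping and the polarity argument are routine; the point needing the most care is the reduction of \eqref{eq:lphom} to ``$\sup\{w^Ty : y \in K\} < \infty$'', that is, checking that homogenizing the inclusion $F(\bar x)+C \subseteq F(x)+C$ produces exactly the single constraint $0 \in G(x)+C$, that this is independent of $\bar x$, and that under \eqref{eq:g0} it trims the feasible $y$-set down to $K$ (the natural ordering cone of $F$) rather than to something larger. Once this identification is in place, the chain (iii) $\Leftrightarrow$ $C^* \subseteq K^*$ $\Leftrightarrow$ $K \subseteq C$ $\Leftrightarrow$ (i) closes the argument.
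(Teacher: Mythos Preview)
Your proof is correct and uses the same two ingredients as the paper --- Proposition \ref{prop:ex} for the link between (i) and (ii), and Proposition \ref{prop:lp} together with the identification of the feasible $y$-set of \eqref{eq:lphom} with $K$ for the link between (ii) and (iii). The only organizational difference is that the paper argues cyclically ((i)$\Rightarrow$(ii)$\Rightarrow$(iii)$\Rightarrow$(i)), whereas you prove the two equivalences (i)$\Leftrightarrow$(ii) and (ii)$\Leftrightarrow$(iii) directly; in particular your passage (iii)$\Leftrightarrow$(ii) via $C^*\subseteq K^*$ and bipolarity is slightly cleaner than the paper's separate (ii)$\Rightarrow$(iii) and (iii)$\Rightarrow$(i) steps, but the content is the same.
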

\begin{proof}
	(i) $\Rightarrow$ (ii). Assume (ii) is violated. Then there is some $y \in K \setminus C$.
	 By the definition of $K$, there exists $x \in \R^n$ such that $y \in G(x) \subseteq G(x)+C$
	  and $0 \in G(x) \subseteq G(x)+C$. 
	From the latter statement and the fact $G(0) + G(x) = G(x)$ we obtain $G(0) + C \subseteq G(0) + G(x) + C + C = G(x) + C$.
	The inclusion is strict, since $y \in G(x)+C$ and, by \eqref{eq:g0}, $y \not\in C = G(0)+C$. 
	By Proposition \ref{prop:ex}, (i) is violated.
	
	(ii) $\Rightarrow$ (iii). Clearly, \eqref{eq:lp} is feasible for every $\bar x \in \dom F \neq \emptyset$. 
	By Proposition \ref{prop:lp}, it remains to show that the homogeneous problem is bounded.
	Let $w \in C^*$ and $\bar x \in \dom F$. By (ii), we have $w \in K^*$.
	By \eqref{eq:g0}, we have $F=F_C$ and thus \eqref{eq:lphom} can be expressed equivalently as
	$$ \max_{y} w^T y \text{ s.t. } y \in K,$$
	which is a bounded problem, by the definition of the polar cone $K^*$.
	 
	(iii) $\Rightarrow$ (i). Assume \eqref{eq:p} has no solution. 
	By Proposition \eqref{prop:ex} there exists some $\bar x \in \R^n$ such that  $G(\bar x)+C \supsetneq G(0)+C \supseteq C$. 
	Thus there is some $\bar y \in G(\bar x)+C$ such that $y \not \in C = (C^*)^*$. 
	This yields the existence of $\bar w \in C^*$ with $\bar w^T \bar y > 0$. 
	The point $(\bar x,\bar y)$ is feasible for \eqref{eq:lphom}. 
	Since the feasible set of a homogeneous problem is a cone, \eqref{eq:lphom} must be unbounded. 
	By Proposition \eqref{eq:lp} this implies that (iii) is violated.	
\end{proof}

The minimizer computation method from Section \ref{sec:minimizer} does not depend on \eqref{eq:p} being bounded. 
By Proposition \ref{prop:minimizer}, the function {\tt ComputeMinimizer} returns a minimizer $\bar x$ for suitable arguments
if and only if all linear programs which occur during the finite procedure have optimal solutions.
Thus, as a consequence of Theorem \ref{thm1}, we obtain the following result.

\begin{cor}\label{cor:minimizer}
	Let $F:\R^n \rightrightarrows \R^q$ be a polyhedral convex set-valued mapping 
	and let $C\subseteq \R^q$ be a polyhedral convex ordering cone with nonempty interior such that \eqref{eq:g0} is satisfied.
	For every $\bar y \in \PP$,
	the function {\tt ComputeMinimizer} defined in Algorithm \ref{alg:minimizer} terminates after finitely many steps. 
	It returns a minimizing point $\bar x$ of \eqref{eq:p} with $\bar y \in F(\bar x) + C$ if \eqref{eq:p} has a solution.
	Otherwise, an empty set is returned.
\end{cor}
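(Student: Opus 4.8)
The plan is to combine Proposition \ref{prop:minimizer} with Theorem \ref{thm1}, treating the three cases (regular termination with a minimizer, termination with $\emptyset$, non-termination) by the dichotomy from Proposition \ref{prop:lp} on the linear programs \eqref{eq:lp} encountered during the run. First I would invoke Proposition \ref{prop:minimizer} directly: since $F$ is polyhedral convex, $C$ is polyhedral convex with nonempty interior, and $\bar y \in \PP$, the function \texttt{ComputeMinimizer} terminates after finitely many steps, and it either returns a minimizer $\bar x$ of \eqref{eq:p} with $\bar y \in F(\bar x)+C$, or it returns $\emptyset$. So the only thing left to pin down is \emph{which} of the two outcomes occurs, and to match it to whether \eqref{eq:p} has a solution.

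Next I would observe that, by inspection of Algorithm \ref{alg:minimizer}, the function returns $\emptyset$ if and only if at some iteration the linear program \eqref{eq:lp} that is solved fails to have an optimal solution; if every such LP has an optimal solution, the loop runs to the condition $N \subseteq W$ and a point $\bar x$ is returned (which is then a minimizer by Proposition \ref{prop:minimizer}, the feasibility of all those LPs being exactly the hypothesis under which that proposition guarantees a minimizer is returned). Hence: a minimizer is returned if and only if every \eqref{eq:lp} arising in the run has an optimal solution. Each such LP has the form \eqref{eq:lp} for some parameter $w$ which, as noted in the proof of Proposition \ref{prop:minimizer}, is an outer normal of a nonempty set $F(\bar x)+C$ and therefore lies in $C^*$, and for some $\bar x \in \dom F$ (since $\bar y \in \PP$ forces the initial $\bar x$, and all updates stay in $\dom F$).

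Now I would apply Theorem \ref{thm1}, whose hypotheses \eqref{eq:g0} and feasibility of \eqref{eq:p} are in force (feasibility: $\PP \neq \emptyset$ since $\bar y \in \PP$, so $\dom F \neq \emptyset$). If \eqref{eq:p} has a solution, then by the implication (i) $\Rightarrow$ (iii) of Theorem \ref{thm1}, \emph{every} \eqref{eq:lp} with $w \in C^*$ and $\bar x \in \dom F$ has an optimal solution; in particular all LPs in the run do, so \texttt{ComputeMinimizer} returns a minimizing point $\bar x$ with $\bar y \in F(\bar x)+C$. Conversely, if \eqref{eq:p} has no solution, then by (i) $\Leftrightarrow$ (iii) there is \emph{some} $w \in C^*$ and $\bar x \in \dom F$ for which \eqref{eq:lp} has no optimal solution — but this does not immediately say the algorithm encounters that particular pair. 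The main obstacle is therefore the ``otherwise'' direction: I must argue that the run cannot terminate with a minimizer when no solution exists. This follows, though, purely from Proposition \ref{prop:minimizer} and the correctness half already proved there: if \texttt{ComputeMinimizer} returned a non-empty set $\Set{\bar x}$, Proposition \ref{prop:minimizer} guarantees $\bar x$ is a minimizing point of \eqref{eq:p}; such a minimizing point exists only if $\dom F \neq \emptyset$, which holds, but more to the point, a feasible problem possessing a minimizing point need not have a full solution — so I instead route the converse through Theorem \ref{thm1}(iii): if no solution exists then (iii) fails, so some \eqref{eq:lp} with admissible parameters is without optimal solution; yet this is not enough. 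The clean resolution: appeal to Proposition \ref{prop:ex} — no solution means $\exists x$ with $G(x)+C \supsetneq G(0)+C = C$, hence the homogeneous LP \eqref{eq:lphom} is unbounded for a suitable $w \in C^*$; but since under \eqref{eq:g0} we have $F = F_C$, any $\bar x$ returned would be a minimizing point of \eqref{eq:p}, contradicting the existence of the strictly dominating $x$ at the level of $G$ only if... The honest difficulty is exactly that a returned $\bar x$ is merely a minimizing \emph{point}, and the non-existence of a solution is about minimizing \emph{directions}; so I expect to need to argue that when \eqref{eq:p} has no solution, Proposition \ref{prop:minimizer}'s termination-with-$\emptyset$ branch is forced because, along the run, the relevant facet normal $\tilde w \in C^*$ of the current iterate's value set does produce an unbounded \eqref{eq:lp} — and I would establish this by contradiction: if \emph{all} LPs in the run were bounded, the algorithm would return a minimizing point $\bar x$, and then one checks that boundedness of \eqref{eq:lphom} for all $w \in C^*$ (Proposition \ref{prop:lp}) gives (iii) of Theorem \ref{thm1}, hence (i), a contradiction. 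Writing this last contrapositive carefully is the crux.
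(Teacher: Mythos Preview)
Your forward direction (if \eqref{eq:p} has a solution then a minimizer is returned) is correct and is exactly the paper's reasoning: Theorem~\ref{thm1}(i)$\Rightarrow$(iii) makes every \eqref{eq:lp} with $w\in C^*$, $\bar x\in\dom F$ solvable, so the $\emptyset$-branch is never taken and Proposition~\ref{prop:minimizer} hands back a minimizer.

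The converse is where you have a genuine gap. You locate it accurately---from ``all LPs \emph{encountered in the run} have optimal solutions'' one cannot infer condition (iii), which quantifies over \emph{all} $w\in C^*$---but your closing contrapositive is circular for precisely this reason, and the earlier remark ``a feasible problem possessing a minimizing point need not have a full solution'' is the wrong intuition here: it is false under \eqref{eq:g0}. Condition \eqref{eq:g0} gives $F=F_C$ and $0^+F(\bar x)=G(0)=C$ for every $\bar x\in\dom F$. If \eqref{eq:p} has no solution, Proposition~\ref{prop:ex} produces $\hat x$ with $G(\hat x)\supsetneq G(0)=C$; pick $\hat y\in G(\hat x)\setminus C$ and note $0\in C\subseteq G(\hat x)$. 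Then $(\hat x,0)\in\gr G$ yields $F(\bar x)\subseteq F(\bar x+t\hat x)$ for all $t\ge 0$, while $(\hat x,\hat y)\in\gr G$ together with $\hat y\notin C=0^+F(\bar x)$ yields $\bar y_0+t\hat y\in F(\bar x+t\hat x)\setminus F(\bar x)$ for some $t>0$. Hence $F(\bar x+t\hat x)\supsetneq F(\bar x)$ and \emph{no} $\bar x\in\dom F$ can be a minimizing point; by Proposition~\ref{prop:minimizer} the algorithm must therefore return~$\emptyset$. An equivalent way to see this, closer to how the paper's one-line ``consequence of Theorem~\ref{thm1}'' unpacks: since $\hat y\notin C=0^+F(\bar x)$, some facet normal $\tilde w\in N$ of the final $F(\bar x)$ satisfies $\tilde w^T\hat y>0$; but $\tilde w\in N\subseteq W$, so \textup{(LP($F,\tilde w,\bar x(\tilde w)$))} was solved during the run, yet its homogeneous problem \eqref{eq:lphom} contains the feasible ray through $(\hat x,\hat y)$ with positive objective and is unbounded---contradicting that this LP had an optimal solution.
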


Applying this result to the homogeneous problem, we obtain the second part required for the solution method in Algorithm \ref{alg3}.

\begin{cor}\label{cor:minimizerdir}
	Let $F:\R^n \rightrightarrows \R^q$ be a polyhedral convex set-valued mapping 
	and let $C\subseteq \R^q$ be a polyhedral convex ordering cone with nonempty interior such that \eqref{eq:g0} is satisfied.
	For every $\hat y \in \QQ \setminus C$,
	the function {\tt ComputeMinimizer} defined in Algorithm \ref{alg:minimizer} for the arguments $(G,C,\hat y)$ terminates after finitely many steps. 
	It returns a nonzero minimizing direction $\hat x \in \R^n\setminus\Set{0}$ of \eqref{eq:p} with $\hat y \in G(\hat x) + C$ whenever \eqref{eq:p} has a solution.
	If \eqref{eq:p} is feasible but has no solution, it returns an empty set.
\end{cor}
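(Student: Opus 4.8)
The goal is to derive Corollary~\ref{cor:minimizerdir} from Corollary~\ref{cor:minimizer} by applying the latter to the homogeneous problem \eqref{eq:q} in place of \eqref{eq:p}. The first thing I would check is that \eqref{eq:q} satisfies the hypotheses of Corollary~\ref{cor:minimizer}: namely, that $G$ is polyhedral convex (immediate, since $\gr G = 0^+\gr F$ is a polyhedral cone whenever $\gr F$ is a polyhedron), that the ordering cone $C$ is unchanged and still has nonempty interior, and above all that the analogue of \eqref{eq:g0} holds for $G$, i.e. that $G^{\mathrm{rec}}(0) = C$ where $G^{\mathrm{rec}}$ is the recession mapping of $G$. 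Here I would use that the recession mapping of $G$ is $G$ itself (since $\gr G$ is already a cone, $0^+\gr G = \gr G$), so the required condition is simply $G(0) = C$, which is exactly the assumed condition \eqref{eq:g0} for $F$. So the hypotheses transfer verbatim.

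**Main steps.** With that in hand, apply Corollary~\ref{cor:minimizer} to the triple $(G, C, \hat y)$ for any $\hat y \in \QQ = 0^+\PP$ (recall $\QQ = 0^+\PP$, cited in the excerpt). It yields: {\tt ComputeMinimizer}$(G,C,\hat y)$ terminates finitely; it returns a minimizing point $\hat x$ of the homogeneous problem \eqref{eq:q} with $\hat y \in G(\hat x)+C$ if \eqref{eq:q} has a solution, and returns $\emptyset$ otherwise. Now I need three translations. First, ``\eqref{eq:q} has a solution'' must be converted into ``\eqref{eq:p} has a solution'': by Proposition~\ref{prop:ex} applied to $G$ (whose recession mapping is $G$ itself), \eqref{eq:q} has a solution iff it is feasible and $\nexists x:\ G(x)+C \supsetneq G(0)+C$; since $G(0)=C$ by \eqref{eq:g0} and $G$ is feasible ($0 \in \dom G$ always), this is precisely the condition from Proposition~\ref{prop:ex} that characterizes \eqref{eq:p} having a solution, given \eqref{eq:p} is feasible. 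If \eqref{eq:p} is feasible but has no solution, then \eqref{eq:q} has no solution and $\emptyset$ is returned. Second, ``$\hat x$ is a minimizing point of \eqref{eq:q}'' is by definition exactly ``$\hat x$ is a minimizing direction of \eqref{eq:p}'' (compare the two definitions in Section~\ref{sec:alg_unbounded}: a minimizer of the homogeneous problem with objective $G$ is a minimizing direction for \eqref{eq:p}). Third, I must argue $\hat x \neq 0$: this is where the hypothesis $\hat y \in \QQ \setminus C$ enters. Since $\hat y \in G(\hat x)+C$ and, by \eqref{eq:g0}, $G(0)+C = C$, we have $\hat y \notin G(0)+C$; hence $\hat x \neq 0$. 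Assembling these gives the statement.

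**Anticipated obstacle.** The only genuinely delicate point is the identification ``recession mapping of $G$ equals $G$'' together with the verification that \eqref{eq:g0} for $F$ yields \eqref{eq:g0} for $G$ --- one must be careful that $G(0)$, as the value at $0$ of the recession mapping, really does equal $C$ and not just contain it; this uses \eqref{eq:g0} directly rather than the weaker facts $G(0) = 0^+F(x)$. A secondary subtlety is ensuring that $\hat y$ ranges over all of $\QQ\setminus C$ and that $\QQ\setminus C$ is the ``right'' source set: this is handled by $\QQ = 0^+\PP$ and the observation that points of $\QQ \cap C$ are uninteresting because then $\hat x = 0$ would be admissible and no nonzero direction is forced. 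Everything else is a direct quotation of Corollary~\ref{cor:minimizer} and the definitions, so the proof is short.
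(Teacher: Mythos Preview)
Your proposal is correct and follows essentially the same approach as the paper: apply Corollary~\ref{cor:minimizer} to the homogeneous problem \eqref{eq:q}, verify that \eqref{eq:g0} transfers (since the recession mapping of $G$ is $G$ itself), and translate minimizing points of \eqref{eq:q} into minimizing directions of \eqref{eq:p}. Your two supporting arguments are in fact slightly more direct than the paper's: you link solvability of \eqref{eq:p} and \eqref{eq:q} via Proposition~\ref{prop:ex} rather than via Theorem~\ref{thm1}(iii) together with Proposition~\ref{prop:lp}, and you obtain $\hat x\neq 0$ immediately from $G(0)+C=C$ rather than routing through the natural ordering cone $K$ and Theorem~\ref{thm1}(ii).
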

\begin{proof}
	Since the homogeneous problem \eqref{eq:q} of \eqref{eq:p} is just a particular case of a polyhedral convex set optimization problem, Corollary \ref{cor:minimizer} can be applied to \eqref{eq:q}. Condition \eqref{eq:g0} is the same for \eqref{eq:q} of \eqref{eq:p}. Note that (LP$\homog (F,w)$) is the same problem as (LP$\homog (G,w)$) and thus by Proposition \ref{prop:lp} and Theorem \ref{thm1}, in case \eqref{eq:p} is feasible, \eqref{eq:q} has the same solution behavior as \eqref{eq:p}. So, if \eqref{eq:p} is feasible, after finitely many steps, {\tt ComputeMinimizer} returns a minimizing point $\hat x \in \R^n$ of \eqref{eq:q} with $\hat y \in G(\hat x) + C$ whenever a solution of \eqref{eq:p} exists. Otherwise it returns the empty set.
	
	It remains to show that, in case \eqref{eq:p} has a solution, $\hat y \not \in C$ implies $\hat x \neq 0$.
	Indeed, if $\hat x = 0$, we have $0 \in G(0)+C$ and $\hat y \in G(0)+C$. Thus $\hat y \in K$ and, by Theorem \ref{thm1}, $\hat y \in C$.
\end{proof}

The solution method in Algorithm \ref{alg3} and the following verification of it provide the main result of this paper.

\begin{algorithm}[hpt]
\DontPrintSemicolon
\SetKwFunction{FMinimizer}{ComputeMinimizer}
\SetKwInOut{Input}{input}\SetKwInOut{Output}{output}
\Input{$F$ (objective mapping), $C \subseteq \R^q$ (ordering cone)}
\Output{solution $(\bar S,\hat S)$ of \eqref{eq:p} if there is one, $(\bar S,\hat S)=(\emptyset,\emptyset)$ otherwise}
\Begin{	
	$\bar S \leftarrow \emptyset$, $\hat S \leftarrow \emptyset$\;
	\If{$\PP = \emptyset$ or \FMinimizer{$G$,$C$,$0$}$=\emptyset$}
	{
		stop (there is no solution)\; \label{line:4}
	}
	compute a V-representation of $\PP$, store the points in $P$ and the directions not belonging to $C$ in $D$\;
	\For{$y \in P$}{
		$\bar S \leftarrow \bar S \;\cup$ \FMinimizer{$F$,$C$,$y$}\;
	}
	\For{$y \in D$}{
		$\hat S \leftarrow \hat S \;\cup$ \FMinimizer{$G$,$C$,$y$}\;
	}
}
\caption{Solution method for not necessarily bounded problems satisfying \eqref{eq:g0}}
\label{alg3}
\end{algorithm}

\begin{thm}\label{thm2}
	Let $F:\R^n \rightrightarrows \R^q$ be a polyhedral convex set-valued mapping $F:\R^n \rightrightarrows \R^q$
	and let $C\subseteq \R^q$ be a polyhedral convex ordering cone $C\subseteq \R^q$ with nonempty interior such that \eqref{eq:g0} is satisfied. Then Algorithm \ref{alg3} is correct and terminates after finitely many steps.
\end{thm}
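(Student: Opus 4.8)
The plan is to prove termination and correctness separately, splitting the correctness part into three cases according to whether \eqref{eq:p} is infeasible, feasible without a solution, or has a solution. The key auxiliary fact I would extract from Proposition~\ref{prop:ex} is this: the homogeneous problem \eqref{eq:q} is itself a polyhedral convex set optimization problem whose recession mapping is $G$ and which satisfies \eqref{eq:g0} (because $G(0)=C$), so \eqref{eq:q} has a solution if and only if $\not\exists x\colon G(x)+C\supsetneq G(0)+C$; hence, as soon as \eqref{eq:p} is feasible, \eqref{eq:p} has a solution exactly when \eqref{eq:q} does (this is the reasoning already used in the proof of Corollary~\ref{cor:minimizerdir}). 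For termination I would observe that Algorithm~\ref{alg3} only performs the decidable test ``$\PP=\emptyset$'' (equivalently $\dom F=\emptyset$), one computation of a V-representation of $\PP$ via the vectorial relaxation (a finite procedure), and finitely many calls of {\tt ComputeMinimizer}: one with arguments $(G,C,0)$, one with $(F,C,y)$ for each $y$ in the finite point set $P$, and one with $(G,C,y)$ for each $y$ in the finite direction set $D$. Each such call terminates after finitely many steps --- for $(F,C,y)$ with $y\in P\subseteq\PP$ by Corollary~\ref{cor:minimizer}, for $(G,C,y)$ with $y\in D\subseteq\QQ\setminus C$ by Corollary~\ref{cor:minimizerdir}, and for $(G,C,0)$ by Corollary~\ref{cor:minimizer} applied to \eqref{eq:q} (note that $0\in\QQ=0^+\PP$, which is nonempty since \eqref{eq:q} is always feasible) --- so Algorithm~\ref{alg3} terminates.

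For correctness, if \eqref{eq:p} is infeasible then $\PP=\emptyset$, the guard of the {\tt if}-statement in Algorithm~\ref{alg3} is met, and $(\emptyset,\emptyset)$ is returned, which is correct. If \eqref{eq:p} is feasible but has no solution, then $\PP\neq\emptyset$ while \eqref{eq:q} has no solution by the observation above, so Corollary~\ref{cor:minimizer} applied to \eqref{eq:q} forces {\tt ComputeMinimizer}$(G,C,0)=\emptyset$; again the guard is met and $(\emptyset,\emptyset)$ is returned, correctly. If \eqref{eq:p} has a solution then it is feasible, hence $\PP\neq\emptyset$ and \eqref{eq:q} has a solution, so Corollary~\ref{cor:minimizer} applied to \eqref{eq:q} makes {\tt ComputeMinimizer}$(G,C,0)$ return a nonempty set; therefore the guard fails and the algorithm reaches the two loops.

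It then remains to verify that in this last case the returned pair $(\bar S,\hat S)$ is a solution of \eqref{eq:p}. Writing the computed V-representation as $\PP=\conv P+\cone D'$ with $P\neq\emptyset$ and $D=D'\setminus C$, every $d\in D'$ lies in $0^+\PP=\QQ$, so $D\subseteq\QQ\setminus C$. By Corollary~\ref{cor:minimizer}, for each $y\in P\subseteq\PP$ the call {\tt ComputeMinimizer}$(F,C,y)$ returns a minimizing point $\bar x_y\in\dom F$ of \eqref{eq:p} with $y\in F(\bar x_y)+C$; by Corollary~\ref{cor:minimizerdir}, for each $y\in D$ the call {\tt ComputeMinimizer}$(G,C,y)$ returns a nonzero minimizing direction $\hat x_y\in\dom G\setminus\Set{0}$ of \eqref{eq:p} with $y\in G(\hat x_y)+C$. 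Thus $\bar S=\Set{\bar x_y\given y\in P}$ is a nonempty finite set of minimizing points contained in $\dom F$, and $\hat S=\Set{\hat x_y\given y\in D}$ is a finite set of minimizing directions contained in $\dom G\setminus\Set{0}$. The infimum-attainment inclusion $\PP\subseteq C+\conv\bigcup_{\bar x\in\bar S}F(\bar x)+\cone\bigcup_{\hat x\in\hat S}G(\hat x)$ is then obtained by writing an arbitrary $p\in\PP$ as a convex combination of the points $y\in P$ plus a conic combination of the directions $d\in D'$, substituting the memberships $y\in F(\bar x_y)+C$ for $y\in P$ and $d\in G(\hat x_d)+C$ for $d\in D$ (while using $d\in C$ for $d\in D'\setminus D$, which holds by definition of $D$, together with the fact that $C$ is a convex cone), and collecting all $C$-summands into the leading cone $C$. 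Since every element of $\bar S$ is a minimizing point and every element of $\hat S$ is a minimizing direction, $(\bar S,\hat S)$ is by definition a solution of \eqref{eq:p}.

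The step I expect to be the main obstacle is this last one: one must be careful that the recession directions of the V-representation which happen to lie in $C$ may safely be discarded without destroying infimum attainment, and that every direction which is \emph{not} discarded lies in $\QQ\setminus C$, so that Corollary~\ref{cor:minimizerdir} genuinely applies and yields a \emph{nonzero} minimizing direction (an element of $\dom G\setminus\Set{0}$, as the solution concept demands). The identities $\cone D'=0^+\PP=\QQ$ and $D'\setminus D\subseteq C$ are exactly what reconcile these two requirements; the remaining work --- matching each branch of the algorithm to the appropriate one of Proposition~\ref{prop:ex}, Corollary~\ref{cor:minimizer} and Corollary~\ref{cor:minimizerdir} --- is routine.
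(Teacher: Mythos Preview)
Your proposal is correct and follows essentially the same route as the paper: both arguments use the identity $\PP=\conv P+\cone(D\setminus C)+C$ for infimum attainment and invoke Corollaries~\ref{cor:minimizer} and~\ref{cor:minimizerdir} to handle the individual calls of {\tt ComputeMinimizer}. Your treatment of the call {\tt ComputeMinimizer}$(G,C,0)$ via Corollary~\ref{cor:minimizer} applied to \eqref{eq:q} is in fact slightly more precise than the paper's appeal to Corollary~\ref{cor:minimizerdir}, since the latter is stated only for $\hat y\in\QQ\setminus C$ while $0\in C$.
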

\begin{proof}
	Let $(P,D)$ be a V-representation of $\PP$. Then, $\PP$ can be expressed as	
	$$ \PP = \conv P + \cone{D} = \conv P + \cone{(D\setminus C)} + C.$$
	By Corollary \ref{cor:minimizerdir}, the method stops in line \ref{line:4} if and only if \eqref{eq:p} has no solution. Otherwise, if \eqref{eq:p} has a solution, by Corollaries \ref{cor:minimizer} and \ref{cor:minimizerdir}, the function {\tt ComputeMinimizer} always returns for $\bar y \in P$ a minimizing point  $\bar x \in \R^n$ of \eqref{eq:p} with $\bar y \in F(\bar x)+C$ and for $\hat y \in D\setminus C$ a minimizing direction $\hat x \in \R^n \setminus\Set{0}$ of \eqref{eq:p} with $\hat y \in F(\hat x)+C$. This yields that $(\bar S,\hat S)$ is a solution in this case. The Algorithm is finite since $P$ and $D$ are finite sets and the function {\tt ComputeMinimizer} terminates after finitely many steps. 
\end{proof}

Note that the case of bounded problems is included in Algorithm \ref{alg3} and boundedness is even verified or disproven: Problem \eqref{eq:p} is feasible and bounded if and only 
Algorithm \ref{alg3} returns $(\bar S,\hat S)$ with $\bar S \neq \emptyset$ and $\hat S = \emptyset$. In this case $\bar S$ is a solution as defined in Section \ref{sec:alg_bounded}. 

\section{Standard form reformulation}\label{sec:std}

We introduce in the section a {\em standard form} of \eqref{eq:p} which is in some sense equivalent to \eqref{eq:p} and show that condition \eqref{eq:g0} in Theorems \ref{thm1} and \ref{thm2} always holds for such a reformulation of the original problem \eqref{eq:p}.

Let $F:\R^n\rightrightarrows \R^q$ be a polyhedral convex set-valued mapping and let $C\subseteq \R^q$ be a polyhedral convex ordering cone. Define
$$ F\std : \R^n \rightrightarrows \R^q,\quad F\std(x)\coloneqq F(x)+C,\qquad C\std \coloneqq C + G(0)$$
and consider the polyhedral convex set optimization problem
\leqnomode
\begin{gather}\label{eq:pstd}\tag{P$\std$}
  \min F\std(x) + C\std \quad \text{ s.t. }\; x \in \R^n.
\end{gather}
\reqnomode 
\eqref{eq:pstd} is called {\em standard form} of \eqref{eq:p}.

\begin{prop}
	The upper images of \eqref{eq:p} and \eqref{eq:pstd} coincide, that is, $$\PP=\PP\std.$$
\end{prop}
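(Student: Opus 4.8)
The plan is to unwind the definitions of the two upper images and show mutual inclusion. Recall $\PP = C + \bigcup_{x} F(x)$ and $\PP\std = C\std + \bigcup_{x} F\std(x)$, where $F\std(x) = F(x) + C$ and $C\std = C + G(0)$. Substituting, $\PP\std = C + G(0) + \bigcup_x (F(x) + C) = G(0) + C + \bigcup_x F(x) = G(0) + \PP$. So the claim $\PP = \PP\std$ is equivalent to $G(0) + \PP = \PP$, i.e.\ that $G(0)$ is contained in the recession cone of $\PP$.

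First I would establish $\PP \subseteq \PP\std$: this is immediate since $0 \in C$ and $0 \in G(0)$ (as $G(0)$ is a cone), so $\PP = \{0\} + \{0\} + \PP \subseteq G(0) + C + \PP\std$-style reasoning gives the trivial inclusion $\PP \subseteq G(0) + \PP = \PP\std$. The substantive direction is $\PP\std \subseteq \PP$, which amounts to $G(0) + \PP \subseteq \PP$. For this I would use the fact recalled in Section~\ref{sec:alg_unbounded} that $\QQ = 0^+\PP$ and that $G(0) = 0^+ F(x)$ for any $x \in \dom F$; in particular $G(0) \subseteq \bigcup_x G(x) + C = \QQ = 0^+\PP$. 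Hence adding $G(0)$ to $\PP$ does not enlarge it: $G(0) + \PP \subseteq 0^+\PP + \PP = \PP$ (using that $\PP$, being a nonempty convex polyhedron, absorbs its own recession cone; if $\PP = \emptyset$ the identity is trivial). Combining both inclusions yields $\PP = \PP\std$.

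The only mild subtlety is the degenerate case $\dom F = \emptyset$ (so $\PP = \emptyset$); then $G$ may not directly help, but $\PP\std = \emptyset$ as well since $F\std(x) = F(x)+C = \emptyset$ whenever $F(x) = \emptyset$, so $\dom F\std = \dom F = \emptyset$ and the identity holds vacuously. I expect the main (very minor) obstacle to be pinning down the justification that $G(0) \subseteq 0^+\PP$ cleanly from the cited facts $\QQ = 0^+\PP$ and $G(0) = 0^+F(x) \subseteq G(x_0) \subseteq \QQ$ for a fixed $x_0 \in \dom F$ — i.e.\ making sure the chain of set inclusions $G(0) \subseteq \QQ = 0^+\PP$ is stated with the right references — but this is routine given the results already available in the excerpt.
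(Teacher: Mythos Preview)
Your argument is correct and rests on the same key fact the paper uses, namely that $G(0)$ is a recession cone that can be absorbed. The only difference is the level at which the absorption takes place: the paper invokes $G(0)=0^+F(x)$ for each $x\in\dom F$ to write $F(x)+G(0)=F(x)$ inside the union, obtaining directly
\[
\PP = C+\bigcup_x F(x) = C+\bigcup_x\bigl(F(x)+G(0)+C\bigr) = C+G(0)+\bigcup_x\bigl(F(x)+C\bigr)=\PP\std,
\]
whereas you first reduce to $\PP\std=G(0)+\PP$ and then absorb $G(0)$ into $\PP$ via the inclusion $G(0)\subseteq\QQ=0^+\PP$. Your route is a small detour (it appeals to the additional identity $\QQ=0^+\PP$), while the paper's is marginally more direct; but the substance is the same, and your handling of the degenerate case $\dom F=\emptyset$ is a nice extra that the paper omits.
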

\begin{proof}
	As shown, for instance, in \cite[Proposition 2]{HeyLoe}, $G(0)$ is the recession cone of $F(x)$ for all $x \in \dom F$. Thus,
	\begin{align}
		\PP &= C + \bigcup_{x\in \R^n} F(x) = C + \bigcup_{x\in \R^n} (F(x) + C)  \nonumber \\
		    &= C + \bigcup_{x\in \R^n} (F(x) + G(0) + C) = C +  G(0) +\bigcup_{x\in \R^n} (F(x) + C) = \PP\std, \nonumber
	\end{align}
	which completes the proof.
\end{proof}

\begin{prop} A tuple $(\bar S, \hat S)$ is an infimizer for \eqref{eq:p} if and only if it is an infimizer of \eqref{eq:pstd}.
\end{prop}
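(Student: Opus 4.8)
The plan is to unwind the definition of infimizer for each of the two problems and show the two conditions are literally the same inclusion, exploiting the previously proven equality $\PP = \PP\std$ together with the fact that $G(0)$ is the common recession cone of all nonempty values $F(x)$. First I would record what the defining inclusion says for \eqref{eq:p}: a tuple $(\bar S,\hat S)$ with $\bar S \subseteq \dom F$ nonempty, $\hat S \subseteq \dom G \setminus \Set{0}$, is an infimizer iff
$$ \PP \subseteq C + \conv \bigcup_{x \in \bar S} F(x) + \cone \bigcup_{x \in \hat S} G(x). $$
For \eqref{eq:pstd} the analogous condition reads
$$ \PP\std \subseteq C\std + \conv \bigcup_{x \in \bar S} F\std(x) + \cone \bigcup_{x \in \hat S} G\std(x), $$
where $G\std$ is the recession mapping of $F\std$.

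The key observation is that the right-hand sides of these two inclusions are equal. Indeed $\gr F\std = \gr F + (\Set{0}\times C)$, so $0^+\gr F\std = 0^+\gr F + (\Set{0}\times C) = \gr G + (\Set{0}\times C)$, giving $G\std(x) = G(x) + C$ for all $x$; in particular $\dom G\std = \dom G$, so the admissibility constraints on $\hat S$ are identical, and $G\std(0) = G(0) + C = C\std$. Now, using $F\std(x) = F(x)+C$ and the fact (from \cite[Proposition 2]{HeyLoe}, already invoked above) that $G(0)$ is the recession cone of each $F(\bar x)$, so that $F(\bar x) + G(0) = F(\bar x)$, one computes
$$ C\std + \conv \bigcup_{x \in \bar S} F\std(x) + \cone \bigcup_{x \in \hat S} G\std(x) = C + G(0) + \conv \bigcup_{x \in \bar S} F(x) + \cone \bigcup_{x \in \hat S} G(x) = C + \conv \bigcup_{x \in \bar S} F(x) + \cone \bigcup_{x \in \hat S} G(x), $$
where the last step absorbs $G(0)$ into $\conv \bigcup_{x\in\bar S} F(x)$ (adding the common recession cone of the $F(\bar x)$ to their convex hull changes nothing) and absorbs the extra copies of $C$ coming from $F\std$ and $G\std$ into the leading $C$. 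Combining this with $\PP = \PP\std$, the two inclusions are identical, and also the side conditions $\bar S \subseteq \dom F = \dom F\std$ and $\hat S \subseteq \dom G\setminus\Set{0} = \dom G\std\setminus\Set{0}$ match, which proves the equivalence.

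The only point needing a little care — and the one I would expect to be the main obstacle — is the absorption step: checking that $G(0) + \conv \bigcup_{x\in\bar S} F(x) = \conv \bigcup_{x\in\bar S} F(x)$. This uses that $G(0)$ is the recession cone of the polyhedral convex set $F(\bar x)$ for every $\bar x\in\dom F$ (all values share this recession cone), hence $0^+\big(\conv\bigcup_{x\in\bar S}F(x)\big) \supseteq G(0)$ by standard polyhedral facts, so adding $G(0)$ is redundant. Likewise one should note that an empty $\bar S$ is excluded by definition, so $\conv\bigcup_{x\in\bar S}F(x)$ is never empty and these manipulations are valid. Everything else is routine rewriting of the definitions.
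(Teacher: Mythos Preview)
Your argument is correct and follows essentially the same route as the paper: both proofs reduce the equivalence to the identity $\PP=\PP\std$ together with the equality of the two right-hand sides, obtained by absorbing $G(0)$ into $\conv\bigcup_{x\in\bar S}F(x)$ (via the recession-cone fact, which the paper cites as \cite[Propositions 2 and 4]{HeyLoe}) and absorbing the extra copies of $C$ into the leading one. Your write-up is in fact slightly more careful than the paper's, since you explicitly compute $G\std(x)=G(x)+C$ and verify that the admissibility constraints $\bar S\subseteq\dom F=\dom F\std$ and $\hat S\subseteq\dom G\setminus\{0\}=\dom G\std\setminus\{0\}$ match.
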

\begin{proof}
	By \cite[Propositions 2 and 4]{HeyLoe}, $G(0)$ is the recession cone of $\conv\bigcup_{\bar x \in \bar S} F(x)$.  Thus,
	\begin{align}
		\PP\std = \PP & \subseteq C + \conv \bigcup_{x\in \bar S} F(x) + \cone \bigcup_{x \in \hat S} G(x) \nonumber \\
		            &= C + G(0) + \conv \bigcup_{x\in \bar S} (F(x) + C) + \cone \bigcup_{x \in \hat S} (G(x)+C) \nonumber\\
					&= C\std + \conv \bigcup_{x\in \bar S} F\std(x) + \cone \bigcup_{x \in \hat S} G\std(x), \nonumber
	\end{align}
	which proves the claim.
\end{proof}

\begin{prop} \label{prop:std_inf} 
	A point $\bar x$ (direction $\hat x$) is a minimizer for \eqref{eq:p} if and only if it is a minimizer for \eqref{eq:pstd}. 
\end{prop}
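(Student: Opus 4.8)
The plan is to observe that, although \eqref{eq:p} and \eqref{eq:pstd} are formally different problems, their objective values agree on a common domain, namely $F\std(x)+C\std=F(x)+C$ for every $x\in\dom F$, with the analogous identity for the recession mappings. Once this is established, the defining condition for a minimizing point (resp.\ minimizing direction) of \eqref{eq:pstd} becomes word for word the defining condition for \eqref{eq:p}, and the proposition follows at once.

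First I would record the elementary facts. Since $C$ is a nonempty convex cone, $C+C=C$, and $0\in G(0)$ because $\gr G=0^+\gr F$ is a cone and hence contains $(0,0)$; also $\dom F\std=\dom F$, as $F\std(x)=F(x)+C$ is nonempty exactly when $F(x)$ is. By \cite[Proposition 2]{HeyLoe}, $G(0)=0^+F(x)$ for all $x\in\dom F$, hence $F(x)+G(0)=F(x)$ there. Moreover, for $(x,y)\in\gr G$ and $z\in G(0)$ one has $(x,y)+t(0,z)\in\gr G$ for all $t\ge 0$, so $G(0)\subseteq 0^+G(x)$ and therefore $G(x)+G(0)=G(x)$ for all $x\in\dom G$. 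Consequently, for $x\in\dom F$,
$$ F\std(x)+C\std=F(x)+C+\bigl(C+G(0)\bigr)=F(x)+G(0)+C=F(x)+C . $$

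Next I would identify the recession mapping $G\std$ of $F\std$. From $F\std(x)=F(x)+C$ we get $\gr F\std=\gr F+(\Set{0}\times C)$, a Minkowski sum of polyhedra, and since the recession cone distributes over such sums, $\gr G\std=0^+\gr F\std=0^+\gr F+(\Set{0}\times C)=\gr G+(\Set{0}\times C)$; that is, $G\std(x)=G(x)+C$ for all $x$, and in particular $\dom G\std=\dom G$. Then, for $x\in\dom G$, the same computation as above gives $G\std(x)+C\std=G(x)+C+\bigl(C+G(0)\bigr)=G(x)+G(0)+C=G(x)+C$.

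Finally I would conclude. A point $\bar x$ is a minimizing point of \eqref{eq:pstd} iff $\bar x\in\dom F\std=\dom F$ and no $x\in\R^n$ satisfies $F\std(x)+C\std\supsetneq F\std(\bar x)+C\std$; by the first displayed identity (and the fact that an empty set strictly contains nothing, so only $x\in\dom F$ matter) this is precisely the condition that $\bar x$ be a minimizing point of \eqref{eq:p}. The argument for a minimizing direction $\hat x$ is identical, with $F$, $F\std$ replaced by $G$, $G\std$, using $G\std(x)+C\std=G(x)+C$ and $\dom G\std=\dom G$, and noting that $\hat x\neq 0$ is the same requirement for both problems. I expect no genuine obstacle here; the only real work is the recession-cone bookkeeping in the middle paragraphs—verifying $G\std(x)=G(x)+C$ and $G(x)+G(0)=G(x)$—which relies only on closedness of the polyhedra involved and on distributivity of $0^+(\cdot)$ over Minkowski sums of polyhedra.
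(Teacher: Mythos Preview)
Your proposal is correct and follows essentially the same approach as the paper: the key is the pair of identities $F\std(x)+C\std=F(x)+C$ and $G\std(x)+C\std=G(x)+C$, after which the minimizer conditions for \eqref{eq:p} and \eqref{eq:pstd} coincide verbatim. The paper simply asserts these two identities and invokes the definition, whereas you carefully verify them (including the recession bookkeeping $G\std(x)=G(x)+C$ and $G(x)+G(0)=G(x)$); this extra detail is sound but not a different route.
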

\begin{proof}
	Since
	$$ F(x) + C = F\std(x) + C\std \quad \text{and} \quad G(x) + C = G\std(x) + C\std\text{,}$$
	this follows from the definition of a minimizer.
\end{proof}

\begin{prop} \label{prop:std_min} 
	A tuple $(\bar S, \hat S)$ is a solution to \eqref{eq:p} if and only if it is a solution to \eqref{eq:pstd}.
\end{prop}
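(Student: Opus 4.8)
The plan is to deduce this proposition directly from the three results proved immediately above, since the notion of a solution is nothing but a finite infimizer whose point component consists of minimizing points and whose direction component consists of minimizing directions. First I would recall the definition: $(\bar S,\hat S)$ is a solution to \eqref{eq:p} precisely when (a) it is a finite infimizer for \eqref{eq:p}, (b) every $\bar x\in\bar S$ is a minimizing point of \eqref{eq:p}, and (c) every $\hat x\in\hat S$ is a minimizing direction of \eqref{eq:p}; replacing \eqref{eq:p} by \eqref{eq:pstd} throughout defines a solution to the standard form. Note that the requirements that $\bar S,\hat S$ be finite and $\bar S\neq\emptyset$ are conditions on the tuple alone and do not refer to the problem, so they carry over trivially.

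Next I would invoke the preceding proposition, which states that $(\bar S,\hat S)$ is an infimizer for \eqref{eq:p} if and only if it is an infimizer for \eqref{eq:pstd}; this handles condition (a). Then I would apply Proposition \ref{prop:std_inf} element-wise: for each $\bar x\in\bar S$ it is a minimizer for \eqref{eq:p} if and only if it is a minimizer for \eqref{eq:pstd}, and likewise for each $\hat x\in\hat S$; hence conditions (b) and (c) hold for \eqref{eq:p} exactly when they hold for \eqref{eq:pstd}. Combining the three equivalences, $(\bar S,\hat S)$ satisfies all defining conditions of a solution for \eqref{eq:p} if and only if it satisfies all of them for \eqref{eq:pstd}, which is precisely the claim.

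I expect essentially no obstacle: the substantive work has already been carried out in showing that the upper images coincide, that the infimizer property is preserved, and that minimality of individual points and directions is preserved. The only point requiring a little care is that \emph{solution} bundles a global condition (infimum attainment) together with pointwise conditions (minimality of each listed point and direction), so the argument must quote both the infimizer proposition and Proposition \ref{prop:std_inf}; beyond that, no further manipulation of $G(0)$, $C\std$, or the relevant recession cones is needed.
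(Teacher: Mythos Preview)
Your proposal is correct and follows exactly the same approach as the paper: a solution is by definition a finite infimizer consisting of minimizers, so the claim is immediate from the preceding infimizer equivalence together with Proposition~\ref{prop:std_inf}. (Indeed, the paper's proof reads simply ``Follows from Propositions~\ref{prop:std_inf} and~\ref{prop:std_min}'', where the self-reference is evidently a typo for the unlabeled infimizer proposition just above.)
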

\begin{proof}
	Follows from Propositions \ref{prop:std_inf} and \ref{prop:std_min}
\end{proof}

\begin{prop} For the standard form \eqref{eq:pstd} of \eqref{eq:p}, one has $G\std(0) = C\std$.		
\end{prop}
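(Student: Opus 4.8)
The plan is to compute the recession mapping $G\std$ of $F\std$ explicitly and then evaluate it at the origin. By definition $\gr G\std = 0^+\gr F\std$, so the whole statement reduces to a recession-cone computation for the polyhedron $\gr F\std$.

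First I would observe that $\gr F\std$ is a Minkowski sum in $\R^n\times\R^q$: indeed $y\in F(x)+C$ holds exactly when $(x,y)=(x,y')+(0,c)$ for some $(x,y')\in\gr F$ and $c\in C$, so
$$ \gr F\std = \gr F + \bigl(\Set{0}\times C\bigr). $$
Assuming \eqref{eq:p} is feasible (the degenerate case $\dom F=\emptyset$ is trivial, since then all sets in sight are empty), both summands are nonempty polyhedra, and the recession cone distributes over the Minkowski sum of nonempty polyhedra, giving $0^+\bigl(\gr F\std\bigr)=0^+\gr F+0^+\bigl(\Set{0}\times C\bigr)$.

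Next I would identify the two terms. By the definition of the recession mapping, $0^+\gr F=\gr G$. Since the recession cone of a product of polyhedra is the product of the recession cones, since $0^+\Set{0}=\Set{0}$, and since the recession cone of the convex cone $C$ is $C$ itself, we get $0^+\bigl(\Set{0}\times C\bigr)=\Set{0}\times C$. Combining, $\gr G\std = \gr G+\bigl(\Set{0}\times C\bigr)$, i.e.\ $G\std(x)=G(x)+C$ for every $x\in\R^n$. Evaluating at $x=0$ and using the definition $C\std=C+G(0)$ yields $G\std(0)=G(0)+C=C\std$, which is the claim.

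There is no genuine obstacle here; the only points needing a word of care are the nonemptiness hypothesis required for the Minkowski-sum recession identity (covered by feasibility of \eqref{eq:p}, assumed throughout Section \ref{sec:std}) and the elementary facts $0^+\Set{0}=\Set{0}$ and $0^+C=C$ for a cone $C$.
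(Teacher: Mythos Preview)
Your proof is correct and follows the same route as the paper, which simply records $G\std(0)=G(0)+C=C\std$ as ``obvious''; you have just spelled out the recession-cone computation behind the first equality in more detail than the paper does.
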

\begin{proof}
	This is obvious as $G\std (0) = G(0)+C = C\std$.
\end{proof}

We conclude that for a problem given in standard form, condition \eqref{eq:g0} can be omitted in all of the above statements.

\section{Ordering cones with empty interior}\label{sec:empty}

In this last section we drop the assumption that the ordering cone $C$ is required to have nonempty interior.
This assumption was used in Section \ref{sec:minimizer} to show finiteness of the minimizer computation.
All subsequent results use this assumption only in order to use Algorithm \ref{alg:minimizer} and its verification in Proposition \ref{prop:minimizer}. 

Let $C$ be an arbitrary polyhedral convex cone.
Then, the set $F(\bar x)+C$ used in Algorithm \ref{alg:minimizer} can have empty interior.
This means that the vectors in a minimal system of outer normals as defined in Section \ref{sec:minimizer} are not necessarily uniquely defined. 

Any polyhedral convex set $P \subseteq \R^q$ can be expressed as
$$ P = (P + (\aff P)^\bot) \cap \aff P,$$
where $\aff P$ denotes the affine hull of $P$ and $(\aff P)^\bot$ its orthogonal complement. 
The polyhedron $(P + (\aff P)^\bot)$ has full dimension. 
Hence it has a uniquely defined  minimal system of outer normals $\Set{w^1,\dots,w^m}$ (which can be empty).
For this part of an inequality representation of $P$, Algorithm \ref{alg:minimizer} works analogously.

Let $\dim P = r < q$. Then $\aff P$ can always be expressed by $q-r \geq 1$ affine equations and this number is minimal. If $\aff P = \Set{y \in \R^q \given A y = a}$ is a minimal representation by equations, then $\aff P = \Set{y \in \R^q \given A y \leq a, - e^T A y \leq -e^T a}$, where $e^T=(1,\dots,1)$, is a minimal inequality representation.

Since a minimal inequality representation of $\aff P$ is not necessarily uniquely defined,
we need another argumentation for the finiteness in Algorithm \ref{alg:minimizer}.
Algorithm \ref{alg:minimizer_repl} shows how Algorithm \ref{alg:minimizer} can be modified in order to become able to treat ordering cones with empty interior.
For simplicity we assume in Algorithm \ref{alg:minimizer_repl} that $\eqref{eq:p}$ has a solution, i.e., all linear programs have optimal solutions. If this is not the case, one just needs to return the empty set as in the remaining part of Algorithm \ref{alg:minimizer}.

Even though the function $x \mapsto \dim (F(x)+C)$ is constant on the relative interior of $\dom F$ and not larger elsewhere, see, e.g., \cite[Proposition 3.7]{Loe06}, it is not constant on the whole domain of $F$. 
The goal of Algorithm \ref{alg:minimizer_repl} is to determine for given $\bar y \in \PP$ some $\bar x \in \R^n$ with $\bar y \in F(\bar x)+C$ such that
\begin{equation}\label{eq:dim}
	\not\exists x \in \R^n:\,F(x) + C \supseteq F(\bar x)+C \, \text{ and } \, \dim(F(x)+C)>\dim(F(\bar x) + C).
\end{equation} 
This implies that the affine hull of $F(\bar x)+C$ is constant in all remaining steps of Algorithm \ref{alg:minimizer}.

\begin{algorithm}[ht]
\DontPrintSemicolon
\SetKwInOut{Input}{input}\SetKwInOut{Output}{output}
\Input{$F:\R^n \rightrightarrows \R^q$ (objective mapping), $C \subseteq \R^q$ (ordering cone), $\bar y \in \PP$}
\Output{modified $F:\R^n \rightrightarrows \R^q$, some $\bar x \in \R^n$ with $\bar y \in F(\bar x)+C$}
	choose $\bar x \in \R^n$ with $\bar y \in F(\bar x)+C$\;
	\Repeat{leave}
	{
		$N \leftarrow \Set{\text{minimal system of outer normals of $\aff(F(\bar x)+C)$}}$\;
		leave $\leftarrow$ true\;
		\For{$w \in N$}
		{	
		    $(x^*,y^*) \leftarrow$  optimal solution of \eqref{eq:lp}\;
			\If{$y^* \not \in \aff(F(\bar x) + C)$}
			{
				$\bar x \leftarrow x^*$\; \label{alg4:update}
				leave $\leftarrow$ false\;
				break\;				
			}			
		} 
	}
	replace $F$ by the mapping $x \mapsto F(x)+(\aff (F(\bar x)+C))^\bot$\;        

\caption{Replacement of line \ref{alg1_line_2} in Algorithm \ref{alg:minimizer}.}
\label{alg:minimizer_repl}
\end{algorithm}

\begin{prop}\label{prop:aff}
	Let \eqref{eq:p} have a solution. Then for given $\bar y \in \PP$, Algorithm \ref{alg:minimizer_repl} is finite and it computes some $\bar x\in \R^n$ with $\bar y \in F(\bar x)+C$ such that \eqref{eq:dim} holds for the unmodified mapping $F$.
\end{prop}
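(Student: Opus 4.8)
The plan is to analyze the two tasks separately: first that the algorithm halts, and second that the point $\bar x$ it returns satisfies the dimension-maximality condition \eqref{eq:dim}. For finiteness, I would argue that the quantity $\dim(F(\bar x)+C)$ is nondecreasing along the execution and strictly increases each time the update in line \ref{alg4:update} is performed. Indeed, whenever the update is triggered we have found $y^* \in F(x^*)+C$ with $y^* \notin \aff(F(\bar x)+C)$ while, by the second constraint of \eqref{eq:lp}, $F(\bar x)+C \subseteq F(x^*)+C$; since all values of $F_C$ share the recession cone $C+G(0)$ (more precisely the recession cone of each $F(x)+C$ equals $C+G(0)$), the inclusion $F(\bar x)+C\subseteq F(x^*)+C$ together with a point of $F(x^*)+C$ lying outside $\aff(F(\bar x)+C)$ forces $\aff(F(\bar x)+C) \subsetneq \aff(F(x^*)+C)$, hence a strictly larger dimension. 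Since the dimension is an integer bounded above by $q$, only finitely many updates occur; between two consecutive updates the \texttt{for} loop runs over the finite set $N$ and then the \texttt{repeat} loop terminates with \texttt{leave} still \texttt{true}, so the whole procedure is finite.

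For correctness, I would show that upon termination the current $\bar x$ satisfies \eqref{eq:dim} for the original, unmodified mapping $F$. Suppose not: then there is $x \in \R^n$ with $F(x)+C \supseteq F(\bar x)+C$ and $\dim(F(x)+C) > \dim(F(\bar x)+C)$. Pick $y \in (F(x)+C)\setminus \aff(F(\bar x)+C)$, which exists precisely because the dimensions differ and the smaller set is contained in the larger. Termination means the \texttt{for} loop completed a full pass over $N = \{w^1,\dots,w^m\}$, the minimal system of outer normals of $\aff(F(\bar x)+C)$, without ever finding $y^* \notin \aff(F(\bar x)+C)$; in particular for every $w^i$ the optimal value of \eqref{eq:lp} is attained at a point of $\aff(F(\bar x)+C)$. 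Now $\aff(F(\bar x)+C)$ is cut out by the inequalities $w_i^T z \le \gamma_i$ and $-\sum_i w_i^T z \le -\sum_i\gamma_i$ (the standard trick turning the defining equations into inequalities, as recalled just before the algorithm); since $y$ violates membership in this affine set, it violates one of these inequalities, say $w_j^T y > \gamma_j$ for some $j$ (the aggregated inequality being violated reduces to one of the $w_i^T z \le \gamma_i$ being violated). But the point $(x,y)$ is feasible for \eqref{eq:lp} with $w = w_j$, because $y \in F(x)+C$ and $F(\bar x)+C \subseteq F(x)+C$, and it gives objective value $w_j^T y > \gamma_j \ge$ (optimal value restricted to $\aff(F(\bar x)+C)$) — yet by assumption the optimal solution of \eqref{eq:lp} for $w_j$ lies in $\aff(F(\bar x)+C)$, so its value is at most $\gamma_j$. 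This contradicts optimality, so no such $x$ exists and \eqref{eq:dim} holds.

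Finally I would note that replacing $F$ in the last line by $x \mapsto F(x) + (\aff(F(\bar x)+C))^\bot$ does not disturb what has been proved: the new value at $\bar x$ has affine hull equal to all of $\aff(F(\bar x)+C) + (\aff(F(\bar x)+C))^\bot = \R^q$ shifted appropriately — more carefully, $(F(\bar x)+C) + (\aff(F(\bar x)+C))^\bot$ has nonempty interior, and the decomposition $P = (P+(\aff P)^\bot)\cap \aff P$ recalled in Section \ref{sec:empty} shows the modification is precisely the device that makes the subsequent run of Algorithm \ref{alg:minimizer} operate on a full-dimensional set while leaving the optimization problem \eqref{eq:p} unchanged in the relevant sense, since $\bar y \in F(\bar x)+C$ is preserved.

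I expect the main obstacle to be the bookkeeping around recession cones: one needs the fact — available from \cite[Proposition 2]{HeyLoe} — that every $F(x)+C$ has the same recession cone, in order to pass from the set inclusion $F(\bar x)+C\subseteq F(x)+C$ plus a point outside the affine hull to a genuine \emph{strict} inclusion of affine hulls and hence a strict increase of dimension. Without that uniform recession structure the dimension could fail to be monotone and the finiteness argument would break down.
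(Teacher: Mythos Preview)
Your proof is correct and follows essentially the same route as the paper's: a strict dimension jump at every update in line~\ref{alg4:update} gives finiteness, and at termination a violated outer-normal inequality of $\aff(F(\bar x)+C)$ contradicts optimality of the corresponding \eqref{eq:lp}. Your flagged ``main obstacle'' is a non-issue, however: from $F(\bar x)+C\subseteq F(x^*)+C$ one gets $\aff(F(\bar x)+C)\subseteq\aff(F(x^*)+C)$ by monotonicity of the affine hull alone, and $y^*\in(F(x^*)+C)\setminus\aff(F(\bar x)+C)$ then makes the inclusion strict---no uniform recession cone is needed (the paper does not invoke it either). Also, your parenthetical that ``the aggregated inequality being violated reduces to one of the $w_i^Tz\le\gamma_i$'' is backwards; but since the aggregated normal is itself a member of $N$, the conclusion that some $w_j\in N$ is violated is immediate anyway.
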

\begin{proof}
	Let $\bar x^+$ denote the variable $\bar x$ after an update step in line \ref{alg4:update}. Then we have
	$F(\bar x^+)+C \supseteq F(\bar x)+C$ and $\dim (F(\bar x^+)+C) > \dim (F(\bar x) + C)$. Hence there can be only finitely many such update steps and thus the algorithm is finite.
	By similar arguments as used in the proof of Proposition \ref{prop:minimizer}, at termination of the outer loop we have $\bar y \in F(\bar x)+C$.
	
	Assume \eqref{eq:dim} is not satisfied at termination of the outer loop. Then there is some $\tilde x \in \R^n$ with $F(\tilde x) + C \supseteq F(\bar x)+C$ and $\dim(F(\tilde x)+C)>\dim(F(\bar x) + C)$. There is some $\tilde y \in F(\tilde x)+C$ with $\tilde y \not\in \aff(F(\bar x)+C)$. The point $(\tilde x,\tilde y)$ is feasible for \eqref{eq:lp}.
	Since $N$ is a minimal system of outer normals of $\aff(F(\bar x)+C)$ and $\tilde y \not\in \aff(F(\bar x)+C)$ there exists $\tilde w \in N$ with 
	$$\tilde w^T \tilde y > \sup_{x \in \R^n} \Set{ \tilde w^T y \in \R^q \given y \in F(\bar x)+C} \geq \tilde w^T y^*.$$
	This is a contradiction because $\tilde w^T \tilde y$ is larger than the optimal value $\tilde w^T y^*$ of the linear program (LP($F, \tilde w, \bar x$)).
\end{proof}

\section{Numerical examples} \label{sec:num}

In this section, we discuss two numerical examples. The first one is a specific instance of the motivating example in Section \ref{sec:mot}. The data we have chosen lead to an unbounded polyhedral convex set optimization problem. We kept this example small for illustrative purposes. The second example is computationally more challenging. It is a bounded problem, but we use an ordering cone with an empty interior, which requires applying the results from Section \ref{sec:empty}.

The algorithms of this paper were implemented in GNU Octave. We used the polyhedral calculus package {\tt bensolve tools} \cite{bt, CirLoeWei19}. The experiments were run on a desktop computer with a $3.6$ GHz clock speed.

\begin{ex}\label{ex:001}
	\begin{figure}
		\includegraphics[width=.49\textwidth]{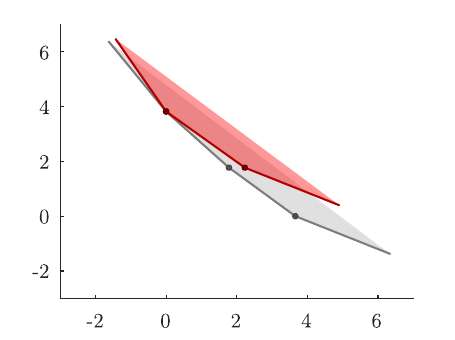}
		\includegraphics[width=.49\textwidth]{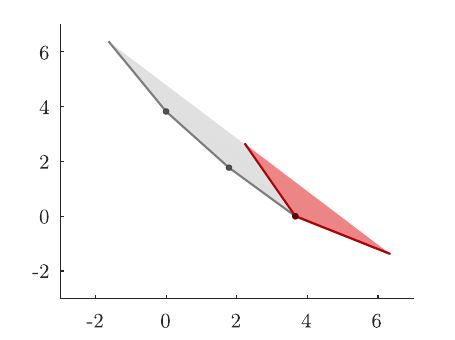}
		\includegraphics[width=.49\textwidth]{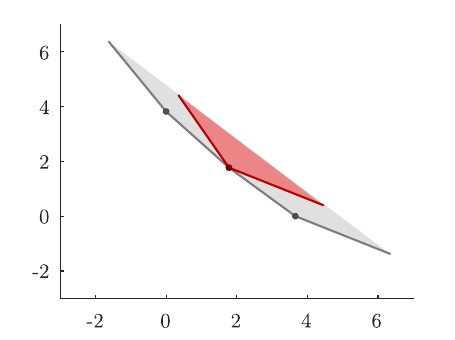}
		\includegraphics[width=.49\textwidth]{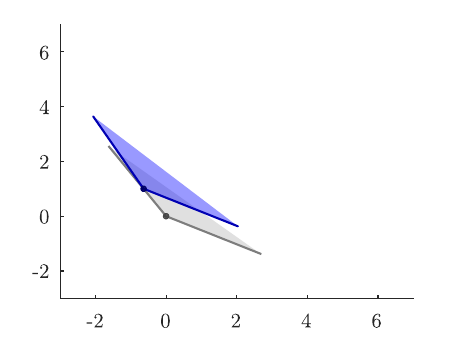}
		\caption{Illustration of Example \ref{ex:001}. The values (red) of the set-valued map $F$ for the three points in $\bar S$ and the value (blue) of the recession map $G$ for the direction in $\hat S$. In the rear the upper image $\PP$ of \eqref{eq:p}, respectively, the upper image $\QQ$ of $\eqref{eq:q}$ are displayed.}\label{fig:001}
	\end{figure}
	In Section \ref{sec:mot} we introduced a set optimization problem which fits into the framework of this paper if the solvency cones $K_1 \subseteq \R^n$ and $K_2 \subseteq \R^n$ are polyhedral convex cones, and if we define a polyhedral convex set-valued objective mapping $F:\R^n \rightrightarrows \R^q$ by 
	$$ F(x) = \left\{y \in \R^q \bigg|\; x \in \Set{\bar x}-K_1,\; x + \begin{pmatrix} y \\ 0 \end{pmatrix} \in K_2\right\}.$$
	A solvency cone $K \subseteq \R^n$ can be constructed from a \emph{bid-ask-matrix} $\Pi \in \R^{n \times n}$ \cite{Schachermayer04}. An entry $\pi_{ij}$ of $\Pi$ denotes the number of units of asset $i$ for which an agent can buy one unit of asset $j$. The following conditions are usually supposed to the bid-ask-matrix $\Pi$ \cite{Schachermayer04}:
$\pi_{ii}=1$, $0<\pi_{ij}$, and $\pi_{ij} \leq \pi_{ik}\pi_{kj}$ for all $i,j,k \in \Set{1,\dots,n}$. 
The solvency cone $K$ induced by the bid-ask-matrix $\Pi$ is spanned by the vectors $\pi_{ij} e^i - e^j$, $i,j \in \Set{1,\dots,n}$, where $e^i$ denotes the $i$-th unit vector in $\R^n$.

Now, let $n=4$ and $q=2$ and the let the solvency cones $K_1$ and $K_2$ be induced, respectively, by the bid-ask-matrices
{ \renewcommand{\arraystretch}{1.5}
\[
	\Pi_1 = \begin{pmatrix}
	1 & \frac{223}{100} & \frac{89}{50} & \frac{47}{25} \\
	\frac{157}{100} & 1 & \frac{54}{25} & \frac{177}{100} \\
	\frac{19}{10} & \frac{211}{100} & 1 & \frac{199}{100} \\
	\frac{101}{50} & \frac{41}{20} & \frac{43}{20} & 1
	\end{pmatrix} \qquad
\Pi_2 = \begin{pmatrix}
1 & \frac{39}{20} & \frac{223}{100} & \frac{207}{100} \\
\frac{37}{20} & 1 & \frac{41}{20} & \frac{54}{25} \\
\frac{38}{25} & \frac{97}{50} & 1 & \frac{37}{20} \\
\frac{11}{5} & \frac{49}{25} & \frac{44}{25} & 1
\end{pmatrix}.
\]
}
Both $K_1$ and $K_2$ each have $12$ extremal directions and $20$ facets. For illustrative purposes, we used relatively large entries in $\Pi_1$ and $\Pi_2$. The solution computed by our implementation is
\[
\bar{S} = \left\{
\begin{pmatrix}
0 \\
-1.7700 \\
-1.0000 \\
0
\end{pmatrix},
\begin{pmatrix}
-3.6600 \\
0 \\
0 \\
0
\end{pmatrix},
\begin{pmatrix}
-1.7800 \\
-1.7700 \\
0 \\
0
\end{pmatrix}
\right\}
\qquad 
\hat{S} = \left\{
\begin{pmatrix}
0.6369 \\
-1.0000 \\
0 \\
0
\end{pmatrix}
\right\}
\]
The presence of a direction in $\hat S$ indicates that the problem instance is unbounded.
The corresponding objective values are displayed in Figure \ref{fig:001}. The runtime was about 0.7 seconds.
\end{ex}

\begin{figure}
	\includegraphics[width=.49\textwidth]{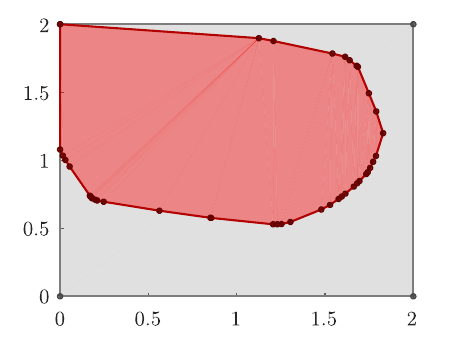}
	\includegraphics[width=.49\textwidth]{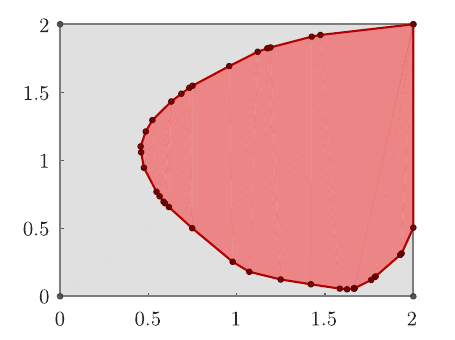}
	\includegraphics[width=.49\textwidth]{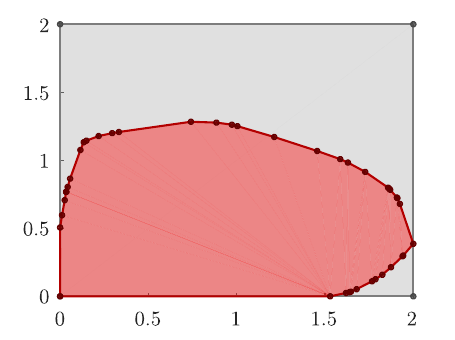}
	\includegraphics[width=.49\textwidth]{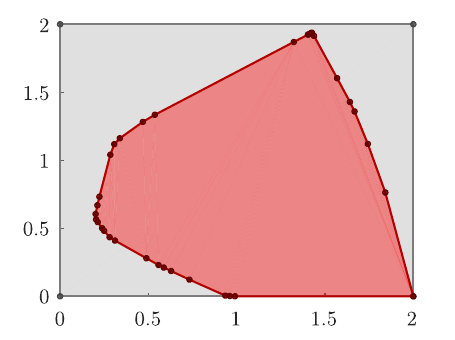}
	\caption{Computational results of Example \ref{ex:002}. The objective values of $F$ for a solution $x^1$, $\dots$, $x^4$ are shown. The square in the rear is the upper image.}\label{fig:002}
\end{figure}

\begin{ex}\label{ex:002}
The graph of $F:\R^{10} \rightrightarrows \R^2$ is taken to be the convex hull of the columns of a $12 \times 60$ matrix $M$ with entries in $\Set{0,1,2}$. To make the example reproducible, $M$ is filled row-wise with the digits of Euler's number $e=2.1718\dots$: $M_{1,1}=2$, $M_{1,2}=7$, $M_{1,3}=1$, $M_{1,4}=8$, $\dots$, $M_{2,1} = 7$, $M_{2,2}=6 $\dots$, M_{12,60} = 5$. Then each entry $M_{ij}$ is replaced by the remainder of the division of $M_{ij}$ by $3$, i.e, $M_{1,1}=2$, $M_{1,2}=1$, $M_{1,3}=1$, $M_{1,4}=2$, $\dots$, $M_{2,1} = 1$, $M_{2,2}=0$, $\dots$, $M_{12,60}=2$. The ordering cone is set to $C = \Set{0} \in \R^2$. The upper image is the square with vertices $v^1=(0,2)$, $v^2=(2,2)$, $v^3=(0,0)$, $v^4=(2,0)$. For every vertex $v^i$, the algorithm computes some  $x^i \in \R^{10}$ such that $v^i \in F(x^i)$, and such that there is no $x \in \R^{10}$ with $F(x) \supsetneq F(x^i)$, $i \in \Set{1,2,3,4}$. This resulting objective values of $F$ are depicted in Figure \ref{fig:002}. The runtime was about 89 seconds. Table \ref{tab:1} provides some information about the number of iterations. 

\begin{table}[h]
\centering
\begin{tabular}{lcccc}
\toprule
\multicolumn{1}{l}{Vertices of the upper image} & $v_1$ & $v_2$ & $v_3$ & $v_4$ \\
\midrule
\multicolumn{1}{l}{Inner iterations of Algorithm \ref{alg:minimizer_repl}} & $1$ & $1$ & $1$ & $2$ \\
\midrule
\multicolumn{1}{l}{Iterations in Algorithm \ref{alg:minimizer}} & $44$ & $38$ & $43$ & $36$ \\
\bottomrule
\end{tabular}
\caption{Number of iterations in Example \ref{ex:002}}\label{tab:1}
\end{table}
\end{ex}

\section{Conclusions}

We presented a solution method for a polyhedral convex set optimization problem in its most general form. Correctness and finiteness are proven without any further assumption than polyhedral convexity of the objective mapping $F:\R^n \rightrightarrows \R^q$ and the ordering cone $C \subseteq \R^q$.


\end{document}